\documentclass[10pt, a4paper]{article}

\usepackage{amssymb, amsmath, amsthm,mathrsfs}
\usepackage{mathbbol, graphicx, latexsym, tkz-graph,mathtools,color}
\usepackage[nobysame]{amsrefs}
\usepackage{cite}

\usepackage[margin = 3 cm]{geometry}

\newtheorem{theorem}{Theorem}[section]
\newtheorem{lemma}[theorem]{Lemma}
	\newtheorem*{theorem*}{Theorem}
\newtheorem{corollary}[theorem]{Corollary}

\theoremstyle{remark}

\usepackage{array}

\parindent=0mm
\parskip=2mm

\begin{document}
\title{Poncelet's Theorem in the four non-isomorphic 
       finite projective planes of order 9}

\author{Norbert Hungerb\"uhler (ETH Z\"urich)\\
           Katharina Kusejko (ETH Z\"urich)}
\date{}

\maketitle

\begin{abstract}\noindent 
We  study Poncelet's  Theorem in  the four  non-isomorphic finite 
projective planes  of order 9. Among these planes, only
the Desarguesian plane turns out to be a Poncelet plane, while
the other three planes which are constructed over the 
miniquaternion near-field of order 9, are not.
This gives  a complete discussion  of Poncelet's  Theorem  
in finite  projective planes  of order 9.
\end{abstract}

\section*{Introduction}

In 1813 Jean-Victor Poncelet \cite{MR1399775} showed one 
of the most beautiful results in projective geometry, known as 
Poncelet's Porism. One version reads as follows.
\begin{theorem*} [Poncelet's Porism]
  Let  $C$ and $C'$ be two conics. 
  If there exists an $m$-sided  polygon, $m \geq 3$, 
  such that the vertices lie on  $C'$ and the sides are
  tangent  to $C$,  then there are  infinitely many  other such  $m$-sided
  polygons. Moreover, for $m \neq m'$, one  cannot find such an  
  $m'$-sided polygon for the same pair of conics $C$ and $C'$.
\end{theorem*}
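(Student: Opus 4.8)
The plan is to reduce Poncelet's Porism to the arithmetic of an elliptic curve attached to the pair of conics. First I would form the \emph{Poncelet correspondence}
\[
  E \;=\; \{\, (p,\ell) \;:\; p \in C',\ \ell \text{ tangent to } C,\ p \in \ell \,\},
\]
a curve inside $C' \times C^{\vee}$, where $C^{\vee}$ is the dual conic of $C$. The first projection $E \to C' \cong \mathbb{P}^{1}$ is two-to-one, since through a generic point of $C'$ pass exactly two tangents to $C$; it ramifies where the two tangents coincide, i.e.\ over $C \cap C'$, which in general position consists of $4$ points. Riemann--Hurwitz gives $2g(E) - 2 = 2(-2) + 4 = 0$, so $E$ is a smooth curve of genus $1$, and fixing one of its points as origin turns it into an elliptic curve; the second projection $E \to C^{\vee}$ is likewise a double cover branched over the $4$ common tangents of $C$ and $C'$. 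On $E$ there are then two natural involutions: $\sigma$, which keeps the point $p$ and swaps the two tangents to $C$ through it, and $\tau$, which keeps the line $\ell$ and swaps its two intersection points with $C'$. A closed Poncelet $m$-gon inscribed in $C'$ and circumscribed about $C$ is exactly an orbit of length $m$ of the \emph{Poncelet map} $P := \tau \circ \sigma$ on $E$.

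The heart of the matter is the form of these maps in the group law. Any non-trivial involution of an elliptic curve that has a fixed point is, after the choice of origin, of the form $z \mapsto c - z$; both $\sigma$ and $\tau$ do have fixed points, namely the ramification points of the two projections, so $\sigma(z) = a - z$ and $\tau(z) = b - z$ for constants $a, b \in E$, whence
\[
  P(z) \;=\; \tau(\sigma(z)) \;=\; b - (a - z) \;=\; z + (b - a).
\]
Thus $P$ is translation by the fixed element $t := b - a \in E$, \emph{independent} of the starting pair $(p,\ell)$. Consequently a Poncelet polygon starting at $(p,\ell)$ closes after $m$ steps if and only if $P^{m}(p,\ell) = (p,\ell)$, i.e.\ $m\,t = O$ in $E$ — a condition that does not mention $(p,\ell)$ at all. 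Hence if one Poncelet $m$-gon exists, then every point of $C'$ is a vertex of such an $m$-gon, producing a one-parameter family, in particular infinitely many, of them. Moreover $m\,t = O$ holds precisely when $\operatorname{ord}(t) \mid m$; requiring the polygon to be genuinely $m$-sided (a primitive $P$-orbit) then forces $m = \operatorname{ord}(t)$, which is an invariant of the pair $(C,C')$ alone, so no $m'$-gon with $m' \neq m$ can occur for the same pair. This proves both assertions.

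The main obstacle is not the group-theoretic core but the verification that $E$ really is a smooth elliptic curve and that $\sigma, \tau$ behave as stated — this is exactly where general position of $C$ and $C'$ enters. One must rule out the degenerate pencils in which the four points of $C \cap C'$ collide (tangency, higher-order contact, or a shared component), since then $E$ acquires singularities or splits into rational components and $P$ need no longer be a translation; these configurations have to be analysed separately, and some of them genuinely fail the porism. Over $\mathbb{R}$ there is the further point that one works on the real locus $E(\mathbb{R})$, a compact one-dimensional Lie group (one or two circles) on which $P$ restricts, so that "infinitely many polygons" reflects the infinitude of a circle group; one should also check the relevant real arcs are non-empty. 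A more classical route parametrises the pencil of conics through $C \cap C'$ and tracks a cross-ratio / addition on its base $\mathbb{P}^{1}$, but it encodes the same abelian group, and I expect the elliptic-curve formulation to be the one that transfers to the finite-field setting that is the subject of this paper.
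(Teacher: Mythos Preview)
Your elliptic-curve argument is correct and is essentially the Griffiths--Harris approach that the paper itself cites. Note, however, that the paper does \emph{not} prove this classical statement: Poncelet's Porism is quoted in the introduction as background, with references to several existing proofs (synthetic, combinatorial, and the elliptic-curve one you reproduce). There is therefore no ``paper's own proof'' to compare against for this particular theorem.

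What the paper \emph{does} prove are the finite analogues in $PG(2,9)$, and here your closing expectation is off: the authors do not transfer the elliptic-curve picture to the finite field. Instead they argue via Pascal's and Brianchon's theorems (any two triangles inscribed in a conic are circumscribed about another conic, hence a Poncelet triangle forces every chain to close in three steps), and for quadrilaterals they normalise coordinates and reduce to a direct check of sixteen explicit oval pairs over $GF(9)$. So if you were hoping the incidence-curve/translation mechanism would reappear in the finite-plane sections, it does not; the paper's methods there are synthetic and computational rather than arithmetic-geometric.
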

There are numerous proofs of Poncelet's Theorem in classical geometry
arising from different areas of mathematics:
Synthetic proofs \cite{MR882916},
combinatorial proofs \cite{LHNH} and purely geometric proofs
using properties of the Euclidean plane.
Moreover, a deep connection between Poncelet's Porism and the theory of elliptic
curves has been established \cite{MR497281}. 
See the recent book by V. Dragovi{\'c} and M. Radnovi{\'c} \cite{DR2011} 
for an overview.

In the present paper, we consider Poncelet's Theorem in finite geometries.
In particular, we introduce the notion of a \emph{Poncelet plane} in order to restate Poncelet's Theorem
for finite projective planes. For $q:=p^k$, $p$ a prime and $k \geq 1$, 
it is well-known how to construct a finite projective plane over the finite field $GF(q)$ of order $q$, 
denoted by $PG(2,q)$ and also known as finite projective Desargues\-ian plane.
Many properties of the real projective plane carry over to $PG(2,q)$.
In~\cite[Section 16.6]{MR882916}, Berger presented a proof of
the general form of Poncelet's Theorem, the {\em Great Poncelet Theorem}, for
projective planes over general fields with more than five elements.
But there exist finite projective planes which are not isomorphic to a projective Desarguesian plane $PG(2,q)$.
The smallest order where one can find such examples is the order 9.
In particular, there are exactly four non-isomorphic finite projective planes of order 9, as proved by Lam et al.\ in \cite{MR1140586}.
Besides $PG(2,9)$ there are three non-Desarguesian finite projective planes of order 9,
all of them constructed over a near-field of order 9. And our main result reads as follows.
\begin{theorem*}
  The only Poncelet plane of order 9 is the finite projective Desarguesian plane $PG(2,9)$.
\end{theorem*}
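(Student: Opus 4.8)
The theorem splits into two parts, which we prove separately. \emph{The positive part} — that $PG(2,9)$ is a Poncelet plane — is the easy direction: since $GF(9)$ is a field with nine, hence more than five, elements, Berger's proof of the Great Poncelet Theorem for projective planes over such fields \cite[Section 16.6]{MR882916} applies verbatim to $PG(2,9)$. The only thing to add is that the hypothesis of Poncelet's Theorem is not vacuous here, i.e.\ that for the admissible values of $m$ (bounded by roughly $q+1=10$) there actually are pairs of conics in $PG(2,9)$ admitting a closed Poncelet $m$-gon; this again follows from the general construction.

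\emph{The negative part} — that none of the three non-Desarguesian planes of order $9$ is a Poncelet plane — is where the work lies, and our plan is explicit computation. Writing $Q$ for the miniquaternion near-field of order $9$, we coordinatize the near-field plane $\Omega_N$, its dual $\Omega_N^{\ast}$ and the Hughes plane $\Omega$ over $Q$, list the conics of each plane together with their tangent lines, and then exhibit, for each plane, one explicit pair of conics $C,C'$ such that: (a) there is at least one closed Poncelet $m$-gon with vertices on $C'$ and sides tangent to $C$, so the hypothesis of Poncelet's Theorem is satisfied; but (b) starting the Poncelet traverse from a suitable point $P_0\in C'$ it fails to return to $P_0$ after $m$ steps — equivalently, there is a second closed $m'$-gon with $m'\neq m$ for the same pair. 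A single such configuration disproves the Poncelet property for the plane in question. Since $\Omega_N^{\ast}$ is dual to $\Omega_N$ and tangency is a self-dual incidence relation, a counterexample in $\Omega_N$ transfers to $\Omega_N^{\ast}$ as soon as the chosen conics dualize to conics; otherwise $\Omega_N^{\ast}$ is treated directly by the same method. As each plane of order $9$ is finite, (a)--(b) is a bounded, fully checkable search, and the proof ultimately consists of displaying one witnessing pair of conics and one bad starting orbit in each of $\Omega_N$ (hence $\Omega_N^{\ast}$) and $\Omega$.

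The main obstacle is setting the computation up correctly and choosing the conic pairs, not running it. The arithmetic of $Q$ is neither associative nor distributive, so incidences, line intersections and the rule ``pass to the other tangent from a point of $C'$'' must be evaluated inside $Q$ without tacitly invoking field identities; and one must watch the points of $C'$ that lie on $0$ or $1$ tangents to $C$ rather than the generic $2$, since those are exactly where the traverse can break down. More importantly, a generic pair of conics will not exhibit the failure — one has to locate a pair for which two different starting points genuinely produce Poncelet cycles of different lengths (or a non-closing orbit next to a closing one), and verify this unambiguously from the explicit orbits. The underlying reason such pairs must exist is structural: in $PG(2,q)$ it is the sharply $3$-transitive action of $PGL(2,q)$ on a conic that forces the period of the Poncelet traverse to be independent of the starting point, and the conics of the miniquaternion planes do not carry enough symmetry for that argument; but converting this heuristic into a concrete, verified counterexample in each of the three planes is the real content of the theorem.
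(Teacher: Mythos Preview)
Your overall strategy matches the paper for the negative part: exhibit, in each non-Desarguesian plane, an explicit pair of ovals carrying Poncelet polygons of two different sizes, and use duality to pass from $\Omega$ to $\Omega^D$. For the positive part the paper takes a different route: rather than cite Berger --- whose proof, the paper explicitly notes, requires ``a number of additional thoughts'' over non-algebraically-closed fields and so does \emph{not} apply verbatim --- it gives a self-contained argument. Since an oval in a plane of order $9$ has $10$ points, it suffices to show that a Poncelet triangle (resp.\ quadrilateral) for a pair $(O_t,O_s)$ excludes any larger Poncelet polygon for the same pair. The triangle case is handled synthetically via Pascal/Brianchon; the quadrilateral case by normalizing $(O_t,O_s)$ with a projective map and inspecting $16$ explicit one-parameter families. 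Your remark about non-vacuousness is also unnecessary: the definition of a Poncelet plane requires only that \emph{every} oval pair be a Poncelet $m$-pair for \emph{some} $m\in\{0,3,\ldots,10,\infty\}$; it does not demand that each $m$ be realized.

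Two factual slips in your negative part. First, the miniquaternion near-field $\mathfrak{S}$ \emph{is} associative --- multiplication on $\mathfrak{S}\setminus\{0\}$ is the quaternion group --- so what fails is left distributivity only, not associativity. Second, in a non-Desarguesian plane there is no algebraic notion of ``conic'' via a quadratic form; the correct objects are \emph{ovals}, defined combinatorially and found by search rather than read off from equations. The paper stresses that locating such ovals is already nontrivial, and the explicit witnessing pairs (together with full tables of their secants and tangents, so that the Poncelet orbits can be verified by hand) are the actual content of the proof --- which your proposal outlines correctly but does not yet supply.
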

The main objects when studying Poncelet's Theorem in finite projective planes are ovals, 
which are a generalization of conics.
One difficulty when working in planes not constructed over a finite field,
such as the three planes of order 9 we consider in this paper,
is to find such ovals.
Since order 9 is the smallest one where the question of finding 
ovals in non-Desarguesian planes becomes important,
some work has been done on ovals (and generalizations thereof
like unitals and arcs) in planes of this order.
For some recent work, see for example \cite{MR2501969, 
MR2854273, MR2528997, MR2254659, MR2906536, MR1936288}.
\section{Preliminaries}
We briefly recall   the  basic  definitions
concerning finite projective planes (see e.g. \cite{MR1612570}).

The   triple  $(\mathbb{P},\mathbb{B},\mathbb{I})$
with  $\mathbb{I} \subset  \mathbb{P} \times  \mathbb{B}$ is  called
\emph{projective plane}, if the following axioms are satisfied.
\begin{enumerate}\itemsep=0mm
\item[(A1)] For  any two elements  $P,Q \in  \mathbb{P}$, $P \neq  Q$, there
  exists  a  unique  element  $g   \in  \mathbb{B}$  with  $(P,g)  \in
  \mathbb{I}$ and $(Q,g) \in \mathbb{I}$.
\item[(A2)] For  any two elements  $g,h \in  \mathbb{B}$, $g \neq  h$, there
  exists  a  unique  element  $P   \in  \mathbb{P}$  with  $(P,g)  \in
  \mathbb{I}$ and $(P,h) \in \mathbb{I}$.
\item[(A3)] There are four  elements $P_1,\ldots,P_4\in\mathbb{P}$ such that
  $\forall   g\in\mathbb{B}$  we   have   $(P_i,g)\in\mathbb  I$   and
  $(P_j,g)\in\mathbb I$  with $i\neq j$  implies $(P_k,g)\notin\mathbb
  I$ for $k\neq i,j$.
\end{enumerate}

Elements   of  $\mathbb{P}$   are  called   \emph{points}  and   elements  of
$\mathbb{B}$ are called \emph{lines}. 
By $(P,g) \in \mathbb{I}$ we denote that the
point $P$ is incident with the line $g$. 
A more convenient notation of
this incidence  relation is  $P \in  g$. 
Three points  are said  to be
\emph{collinear} if they are incident with  the same line and three lines are
said to  be \emph{concurrent} if  they are incident  with the same  point.  
A projective  plane  is called  \emph{finite},  if  the sets  $\mathbb{P}$  and
$\mathbb{B}$ are finite. 
In that case,  it turns out that each line is
incident  with $n+1$ points and  each point  is
incident with $n+1$ of lines, for some $n \geq 1$. 
According to that,  a   finite        projective       plane
  $(\mathbb{P},\mathbb{B},\mathbb{I})$  is said  to be  of \emph{order
    $n$},  if   $\  \left|\mathbb{P}\right|=\left|\mathbb{B}\right|  =
  n^2+n+1$, and denoted by $\mathcal{P}_n$.

For any finite projective plane  $\mathcal{P}_n =(\mathbb{P},\mathbb{B},\mathbb{I})$ of
  order             $n$, we define the  \emph{dual
    projective plane}  of $\mathcal{P}_n$ by $\mathcal{P}^D_n
  :=(\mathbb{B},\mathbb{P},\mathbb{I^*}$),  with $(P,g)  \in \mathbb{I}
  \Leftrightarrow  (g,P) \in  \mathbb{I^*}$.
  Then $\mathcal{P}_n$  is called
  \emph{self-dual}, if $\mathcal{P}_n \cong \mathcal{P}^D_n $, i.e.\ if
  there  exists   a  bijective   map  $\phi:(\mathbb{P},\mathbb{B})\to
  (\mathbb{B},\mathbb{P})$    such    that   $(P,g)\in\mathbb    I\iff
  \phi(P,g)\in\mathbb  I^*$.   
  Incidence  state\-ments  where  the  sets
  $\mathbb{P}$  and  $\mathbb{B}$  are  interchanged are  said  to  be
  \emph{dual to each other}.

In  this paper,  we are  mainly  interested in finite  projective
planes of  order 9, which means  that we have $9^2+9+1=91$  points and
$91$  lines. Each  line  is incident  with 10  points and each point  is
incident  with 10  lines.  

In  order to  generalize  conics to  finite
projective planes, the notion of ovals has been introduced:
An \emph{oval} in $\mathcal{P}_n$ is a set of $n+1$
points, no three of which are collinear.
Every conic is an oval, and for $p$ odd, every oval in $PG(2,p^k)$  is a conic.

A line which intersects an oval $O$ in  two points is called \emph{secant},  a line which
intersects $O$ in  one point is called \emph{tangent}, and a line which is
disjoint to $O$ is called \emph{external line} of $O$.

To reformulate Poncelet's Theorem for finite projective planes,
we take a closer look at pairs of ovals $O_t$   and   $O_s$  in $\mathcal{P}_n$.
An \emph{$m$-sided  Poncelet  polygon} is  a  polygon with  $m$
  sides, $3 \leq m \leq n+1$,  such that the vertices are on $O_s$ and the sides are tangent to $O_t$. 
  According to that, we call  $O_t$ the  \emph{tangent oval}  and $O_s$  the
  \emph{secant oval} of the Poncelet polygon.
 For $3 \leq m \leq n+1$ fixed, $(O_t,O_s)$ is said to form a
  \emph{Poncelet  $m$-pair}, if  there exists  at least  one $m$-sided
  Poncelet polygon  for $O_t$  and $O_s$,  but no  $m'$-sided Poncelet
  polygon, $m' \neq m,\ 3 \leq m'  \leq n+1$, for the same pair can be
  constructed.
 We  say that  $(O_t,O_s)$ forms  a \emph{Poncelet  $0$-pair}, if
   no secant of $O_s$ is a tangent of $O_t$.
 We  say that $(O_t,O_s)$ forms  a \emph{Poncelet $\infty$-pair},
  if there exists at least one secant  of $O_s$, which is a tangent of
  $O_t$, but no $m$-sided Poncelet polygon for $3 \leq m \leq n+1$ can
  be constructed.

Note that finite projective planes may exhibit
  rather unintuitive phenomena compared  to the real projective plane.
  For example, a pair of ovals can be  located such that no point of one oval
  is incident with a  tangent of the other one and  vice versa. 
  Or, in finite projective planes  of even order it may happen  that two ovals
  have all their tangents in common.

With the terminology above, we call a   finite  projective
  plane $\mathcal{P}_n$ a \emph{Poncelet plane}, if  every pair of
  ovals $(O_t,O_s)$ is  a Poncelet $m$-pair, for $3 \leq  m \leq n+1$,
  $m=0$ or $m= \infty$.

Note that in planes of even order all tangents  of an oval meet in one point,
  the so-called nucleus  of the  oval.  Because of that,  only Poncelet  $0$-pairs and
  Poncelet $\infty$-pairs can be constructed  in such planes. In this sense,
  all planes of even order are Poncelet planes.
\section{Poncelet's Theorem in $PG(2,9)$}
The main goal in this section is to show that $PG(2,9)$ is a Poncelet plane.
As mentioned earlier, Berger presented in \cite{MR882916} a synthetic proof of
the \emph{Great Poncelet Theorem}.
His proof is formulated for projective planes over an arbitrary field
with at least five elements. However,  
a number of additional thoughts are necessary to 
ensure that all steps in the proof work out over fields which are not algebraically closed.
Since Berger shows a more general version of Poncelet's Theorem
we want to avoid this discussion, and,
in order to make the paper self-contained, 
we present a shorter proof for Poncelet's Theorem in $PG(2,9)$
which is based upon Pascal's Theorem, similar to~\cite{LHNH}. However, 
as we work in a finite plane, we employ combinatorial arguments
in a completely different way compared to~\cite{LHNH}.
In particular, we show the following statement.
\begin{theorem}
  The finite projective Desarguesian plane of order 9 is a Poncelet plane.
\end{theorem}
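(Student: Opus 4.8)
The plan is to prove that every pair of ovals $(O_t,O_s)$ in $PG(2,9)$ is a Poncelet $m$-pair for some $m\in\{3,\dots,10\}\cup\{0,\infty\}$; equivalently, that one cannot simultaneously have an $m$-sided and an $m'$-sided Poncelet polygon with $m\neq m'$ for the same pair. Since $9=3^2$ is odd, every oval in $PG(2,9)$ is a conic by the remark in the Preliminaries, so it suffices to work with two conics $C=O_t$ and $C'=O_s$. I would introduce the "Poncelet map" (or billiard map) $\pi$ associated to the pair: given a point $P\in C'$, if one of the two tangents from $P$ to $C$ is a secant of $C'$, it meets $C'$ in a second point; choosing a direction of traversal consistently, this defines a partial map $\pi$ on the point set of $C'$. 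A Poncelet $m$-polygon through $P$ is exactly an orbit of $\pi$ of length $m$, so the whole theorem reduces to showing that all orbits of $\pi$ have the same length.

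The key tool, following the approach flagged in the excerpt, is Pascal's Theorem in $PG(2,9)$ (valid because the plane is Desarguesian, hence coordinatized by $GF(9)$, hence a field where Pascal holds for conics). First I would recall or prove the projective form of Pascal that I need: for six points on a conic, the three "diagonal" intersection points of opposite sides are collinear; and its degenerate limits in which some vertices coincide, so that a "side" becomes a tangent line. The crucial step is a \emph{closure lemma} of the following shape: if $P_1P_2\cdots P_m$ is a Poncelet polygon (vertices on $C'$, sides tangent to $C$) and $Q_1Q_2\cdots$ is another Poncelet billiard path starting from an arbitrary point $Q_1\in C'$, then after $m$ steps the path closes up, i.e. $Q_{m+1}=Q_1$. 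Classically this is proved by repeatedly applying Pascal (in its tangent-degenerate form) to configurations of points alternating between the two polygons, to transport the closure condition from one starting point to the next; I would carry out exactly this Pascal chase, but — and this is where the finite setting forces new arguments — I must verify that the auxiliary points produced in each application of Pascal are well-defined (distinct vertices, non-degenerate intersections, lines actually meeting the conic as required) over $GF(9)$, where a line need not meet a conic and a "generic position" argument is unavailable. The plan is to handle these finitely many degeneracies by direct combinatorial case analysis, using that $C'$ has exactly $10$ points and counting secants, tangents and externals.

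Concretely, the steps I would carry out, in order, are: (1) reduce ovals to conics and set up $\pi$ and its inverse as partial involutive-type maps on $C'$, noting $|C'|=10$; (2) establish the tangent-degenerate forms of Pascal's Theorem that will be used, purely projectively; (3) prove the closure lemma by the Pascal chase, isolating the finitely many potential degeneracies; (4) dispose of each degeneracy by a counting argument in $PG(2,9)$ (e.g., a point of $C'$ lies on exactly one tangent and nine secants of $C'$, and analogous statements relating tangents of $C$ to the line pencil through a point of $C'$), showing the offending configurations either cannot occur or can be perturbed to an equivalent one within the plane; (5) conclude that all $\pi$-orbits have a common length $m$, so that if any Poncelet polygon exists it has that length $m$ and no $m'$-polygon with $m'\neq m$ exists — giving a Poncelet $m$-pair — while if no secant of $C'$ is tangent to $C$ we have a $0$-pair and if some such common line exists but no orbit closes within length $10$ we have an $\infty$-pair. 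I expect step (3)–(4), the verification that the Pascal chase does not break down over the non-algebraically-closed field $GF(9)$, to be the main obstacle: in $\mathbb{R}$ or $\mathbb{C}$ one waves this away with "generic position", but here every such lemma needs an honest finite argument, and keeping track of which of the $10$ points and which lines are secant/tangent/external is where the combinatorial work of the paper concentrates.
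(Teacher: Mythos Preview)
Your plan is a reasonable outline of the \emph{classical} Poncelet-closure argument (essentially Berger's route), but it is not what the paper does, and you miss the paper's main structural simplification. The paper exploits that $|O_s|=10$: two Poncelet polygons for the same pair must have disjoint vertex sets (each vertex uses both of its two tangents to $O_t$), so sizes $m<m'$ with $m+m'\le 10$ force $m\in\{3,4\}$. Hence one only has to prove two statements: (i) a Poncelet triangle forbids any $m'$-gon with $m'\ge 4$; (ii) a Poncelet quadrilateral forbids any $5$- or $6$-gon. For (i) the paper does \emph{not} run a general Pascal chase; it proves a single lemma (two triangles inscribed in $O_s$ have their six sides tangent to \emph{some} conic) via Brianchon, and then uses that five tangents determine a conic to identify that conic with $O_t$, forcing any started chain to close after three steps. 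For (ii) there is no Pascal at all: the quadrilateral is normalised by a projectivity, $O_s$ and $O_t$ are parametrised by one $GF(9)$-parameter each, symmetries cut this to $16$ pairs, and a direct count shows that at most $8$ points of $O_s$ are exterior to $O_t$, too few to accommodate a $4$-gon plus a $5$- or $6$-gon.

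So your proposal is a genuinely different route. What it buys is uniformity in $m$ (and potentially in $q$); what it costs is exactly the degeneracy analysis you flag in steps (3)--(4), which the authors explicitly say they wish to avoid over a non-algebraically-closed field. The paper trades that analysis for the $10$-point pigeonhole reduction plus a short computation. If you pursue your plan you should at least note that $\pi$ is only defined on points of $O_s$ exterior to $O_t$, and that your ``closure lemma'' must also cover the case where one orbit closes but some other starting point is an interior point of $O_t$ (so $\pi$ is undefined there); the paper's counting argument for $m=4$ handles precisely this phenomenon, whereas a pure Pascal chase does not see it.
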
\vspace*{-1mm}
In  planes of order  9, ovals consist of  10 points. To prove that
$PG(2,9)$ is a  Poncelet plane, it is therefore enough  to show that
if a $3$-sided  or a $4$-sided Poncelet polygon exists for a pair
of ovals $(O_t,O_s)$,  then this pair is a Poncelet  $3$-pair or $4$-pair,
respectively.

Let us quickly recall the construction of the finite projective Desarguesian plane $PG(2,q)$ constructed over $GF(q)$, 
$q:=p^k$, $p$ an odd prime and $k \geq 1$. 
The points of $PG(2,q)$ are given by non-zero column vectors $[x,y,z]^T$ for $x,y,z \in GF(q)$, 
where $[\lambda x,\lambda y,\lambda z] = [x,y,z]$ for all $\lambda \in GF(q)\setminus \{0\}$. 
Similarly, all lines are denoted by row vectors $[x,y,z]$.
A point $[x,y,z]^T$ is incident with a line $[a,b,c]$ if $ax+by+cz=0$ in $GF(q)$.

The  following  facts are  a  collection of some elementary
properties  we  will  use  later  on (see e.g.
 \cite{MR1612570} for proofs).
\begin{lemma}  \label{connection}
  Let  $g=[g_1,g_2,g_3]$  and  $h=[h_1,h_2,h_3]$  be  two
  different lines in  $PG(2,q)$. The unique intersection  point $P$ of
  $g$ and $h$ is given by the vector product of $g$ and $h$, i.e.\
    $$ P = [g_2h_3-g_3h_2,\ g_3h_1-g_1h_3,\ g_1h_2-g_2h_1]^T. $$
  Similarly, for two points  $P =  [P_1,P_2,P_3]^T$ and  $Q =
  [Q_1,Q_2,Q_3]^T$ in $PG(2,q)$, the unique line
  $g$ through $P$  and $Q$ is given  by
    $$ g = [P_2Q_3-P_3Q_2,\ P_3Q_1-P_1Q_3,\ P_1Q_2-P_2Q_1]. $$
\end{lemma}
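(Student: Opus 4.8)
The plan is to verify directly that the proposed vector satisfies the incidence conditions and is a legitimate homogeneous coordinate vector, and then to invoke the plane axioms for uniqueness. Throughout I use that a point $P=[P_1,P_2,P_3]^T$ is incident with a line $g=[g_1,g_2,g_3]$ precisely when the bilinear pairing $g_1P_1+g_2P_2+g_3P_3$ vanishes in $GF(q)$.

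First I would treat the intersection point. Writing $P:=[g_2h_3-g_3h_2,\ g_3h_1-g_1h_3,\ g_1h_2-g_2h_1]^T$, I expand the pairing of $g$ with $P$:
$$g_1(g_2h_3-g_3h_2)+g_2(g_3h_1-g_1h_3)+g_3(g_1h_2-g_2h_1),$$
in which every monomial cancels against another, so the sum is $0$; hence $P$ lies on $g$. The analogous expansion (or, equivalently, the antisymmetry of this expression under interchange of $g$ and $h$) shows that $P$ lies on $h$ as well. These are purely algebraic identities valid over any commutative ring, so the fact that $GF(q)$ is not algebraically closed causes no difficulty here.

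Next I must check that $P$ is an admissible point, i.e.\ that $P\neq[0,0,0]^T$. Since $g$ and $h$ are distinct lines of $PG(2,q)$, their coordinate vectors are not scalar multiples of one another, hence are linearly independent. If all three coordinates of $P$ vanished, then every $2\times2$ minor of the matrix with rows $g$ and $h$ would be zero, forcing that matrix to have rank at most $1$, which contradicts the linear independence. Thus $P$ is a genuine point, and by axiom (A2) it is the unique point incident with both $g$ and $h$, which establishes the first formula.

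Finally, the second formula follows from the point--line duality built into the coordinate model: the incidence condition $g_1P_1+g_2P_2+g_3P_3=0$ is symmetric under interchanging the roles of the point coordinates and the line coordinates. Replaying the computation above with the points $P$ and $Q$ in place of the lines $g$ and $h$ shows that $g:=[P_2Q_3-P_3Q_2,\ P_3Q_1-P_1Q_3,\ P_1Q_2-P_2Q_1]$ passes through both $P$ and $Q$ and is nonzero whenever $P\neq Q$; axiom (A1) then supplies uniqueness. The only step requiring any care is the nonvanishing of the cross product, which is precisely the linear-independence argument just given; beyond that the proof is a routine verification with no real obstacle.
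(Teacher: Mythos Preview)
Your argument is correct: the direct verification that the cross product annihilates both inputs, together with the rank argument showing the result is nonzero, is exactly the standard proof of this fact, and invoking axioms (A1)/(A2) for uniqueness is appropriate. The paper itself does not prove this lemma at all but simply refers the reader to Hirschfeld's book, so there is no in-paper proof to compare against; your write-up is a complete and self-contained justification.
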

\begin{lemma} \label{CollDet}   
  Let    $P=[P_1,P_2,P_3]^T$,   $Q    =
  [Q_1,Q_2,Q_3]^T$ and $R = [R_1,R_2,R_3]^T$  be three points in
  $PG(2,q)$. 
  Then $ P,Q,R$ are collinear if and only if
$$\det\begin{pmatrix} P_1 & Q_1 & R_1 \\ P_2 & Q_2 & R_2 \\ P_3 & Q_3 & R_3  \end{pmatrix}=0. $$
\end{lemma}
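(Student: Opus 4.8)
The plan is to translate the projective notion of collinearity into a statement of linear algebra over $GF(q)$ and to recognise the displayed determinant as the resulting obstruction. Write $M$ for the $3\times 3$ matrix whose columns are the coordinate vectors $P$, $Q$, $R$, so that the quantity in the statement is $\det M$. For a line $g=[a,b,c]$, incidence of $g$ with a point $X=[X_1,X_2,X_3]^T$ means $aX_1+bX_2+cX_3=0$; interpreting $g$ as a row vector, this is precisely $gX=0$. Hence the three points $P,Q,R$ are collinear if and only if there is a nonzero row vector $g$ with $gP=gQ=gR=0$, which in matrix form reads $gM=0$, i.e.\ $M^{T}g^{T}=0$.

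For the forward direction, suppose $P,Q,R$ are collinear. Then some nonzero line $g$ satisfies $gM=0$, so the homogeneous system $M^{T}x=0$ has the nontrivial solution $x=g^{T}$. Therefore $M^{T}$ is singular and $\det M=\det M^{T}=0$. Conversely, if $\det M=0$ then $M^{T}$ is singular, so $M^{T}x=0$ admits a nonzero solution $x$; the row vector $g:=x^{T}$ is then a nonzero line incident with each of $P,Q,R$, and the three points are collinear. This argument is uniform and needs no assumption that the points be pairwise distinct: if two of them coincide as projective points, the corresponding columns of $M$ are proportional, whence $\det M=0$, and a common line trivially exists, so both sides of the equivalence hold.

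Alternatively, one can argue directly through Lemma \ref{connection}. When $P$ and $Q$ are distinct, that lemma gives the unique line through them as the vector product $g=[P_2Q_3-P_3Q_2,\ P_3Q_1-P_1Q_3,\ P_1Q_2-P_2Q_1]$, and $R$ is collinear with $P,Q$ exactly when $gR=0$. Expanding $gR$ shows that it equals the scalar triple product $(P\times Q)\cdot R$, which coincides with $\det M$ after invoking that the determinant is invariant under transposition and under cyclic permutation of columns; hence collinearity is again equivalent to $\det M=0$, the case of coincident points being handled as above.

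The computations here are entirely routine, so I do not expect a genuine obstacle. The only points requiring care are the careful identification of the abstract incidence relation $P\in g$ with the algebraic condition $gP=0$, and the bookkeeping that matches the column convention of the stated determinant with the row-vector and cross-product conventions of Lemma \ref{connection}; both are dispatched by $\det M=\det M^{T}$ together with the permutation rules for determinants.
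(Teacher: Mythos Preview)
Your proof is correct and entirely standard. The paper itself does not supply a proof of this lemma; it is stated among the ``elementary properties'' with the blanket reference ``see e.g.~\cite{MR1612570} for proofs'', so there is nothing to compare against beyond noting that your argument is exactly the textbook one that reference would provide.
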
\vspace*{-2mm}
In finite projective Desarguesian planes $PG(2,q)$ over
a field of odd characteristic,  ovals coincide with conics (see, e.g., \cite{MR0344216}).  
Thus, an oval can be described as the solutions of
\begin{equation} \label{eq ovals}
O: ax^2+by^2+cz^2+dxy+exz+fyz=0,
\end{equation}
where $a,b,c,d,e,f \in GF(q),\ (a,b,c,d,e,f) \neq (0,0,0,0,0,0)$ and the
matrix $M_O$ associated to this quadratic form,
$$ M_O = \begin{pmatrix} a & d/2 & e/2 \\ 
d/2 & b & f/2 \\ e/2 & f/2 & c \end{pmatrix},$$
is non-singular for ovals. 
Otherwise, for $M_O$ singular, the equation \eqref{eq ovals} describes a line, a pair of lines or a point.

The next step is to show the following result for Poncelet triangles.
\begin{theorem}\label{PonceletTriangle}
  Let $(O_t,O_s)$ be a pair of ovals in $PG(2,q)$ such that a Poncelet triangle can be
  constructed. Then  no $m$-sided Poncelet polygon  for $4 \leq m  \leq q+1$
  for the same pair of ovals exists.
\end{theorem}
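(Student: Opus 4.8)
The plan is to use a normalization argument together with Pascal's Theorem. Suppose $(O_t,O_s)$ admits a Poncelet triangle with vertices $A_1,A_2,A_3\in O_s$ and sides tangent to $O_t$. First I would exploit the transitivity of $PGL(3,q)$ on the ovals (conics) of $PG(2,q)$ to move $O_t$ into a convenient normal form; since a projective transformation preserves tangency, secancy, and the Poncelet polygon structure, this loses no generality. A natural choice is $O_t:\ y^2=xz$ (or equivalently $xz=y^2$), whose points can be rationally parametrized as $P(\lambda)=[\lambda^2,\lambda,1]^T$, $\lambda\in GF(q)$, together with $[1,0,0]^T$, and whose tangent line at $P(\lambda)$ is readily computed from Lemma~\ref{connection}. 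The key feature is that the tangency of a line from a point $Q$ to $O_t$ is controlled by a quadratic in the parameter $\lambda$ whose two roots are the two tangent points from $Q$; comparing these roots for the three vertices is what links consecutive sides of any Poncelet polygon.

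Next I would set up the combinatorial heart of the argument. Starting from a vertex $A_1\in O_s$, each of the two tangents from $A_1$ to $O_t$, being a secant of $O_s$, meets $O_s$ in a second point; this defines (up to the choice of one of the two tangents at each stage) a ``Poncelet map'' on the points of $O_s$ that sends a vertex to the next vertex of a putative Poncelet polygon. A Poncelet triangle is precisely a $3$-cycle of this process that closes up. The goal is to show that if one such closed triangle exists, then \emph{every} Poncelet polygon built this way closes after exactly $3$ steps, and in particular no closed polygon of side count $4\le m\le q+1$ can occur. The clean way to do this is to show that the closure condition is an identity in the conic parameter rather than an accident at one point: using the parametrization of $O_t$ and expressing $O_s$ as a conic $\eqref{eq ovals}$, the condition ``the polygon starting at $P(\lambda)$ closes after $k$ steps'' becomes a polynomial equation in $\lambda$ of bounded degree, and Pascal's Theorem applied to the six points $A_1,A_2,A_3$ and their ``shifted'' copies forces the relevant polynomial to vanish identically once it vanishes for one parameter value, provided $q$ is large enough to rule out the degenerate low-order cases. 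This is the same mechanism as in~\cite{LHNH}, but here the bookkeeping is finite and combinatorial: one counts how the $q+1$ tangents of $O_t$ distribute among the secants of $O_s$.

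Concretely, the steps in order would be: (i) normalize $O_t$ to $y^2=xz$ and record its parametrization and tangent lines; (ii) given the Poncelet triangle $A_1A_2A_3$, extract the three tangent lines and their tangent points, and read off the corresponding conic coefficients of $O_s$ from the three incidence conditions plus one more degree of freedom; (iii) apply Pascal's Theorem to an appropriate hexagon inscribed in $O_s$ to derive a relation among the tangent points of $O_t$ forced by the triangle; (iv) convert ``an $m$-sided Poncelet polygon exists'' into the statement that the associated closure polynomial has a root, show via (iii) that this polynomial is forced to be a nonzero constant multiple of a fixed polynomial depending only on the triangle, and conclude that its only admissible vanishing is at $m=3$; (v) dispose of the small cases ($m=4,5$) where a low-degree polynomial might spuriously vanish, using that $q=9$ (and in the general statement $q>5$) is large enough.

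The main obstacle I anticipate is step (iii)–(iv): making precise the passage from ``one closed triangle exists'' to ``the closure polynomial vanishes identically,'' since a priori the triangle is a single data point and one must manufacture enough algebraic constraints from Pascal's Theorem (and possibly from a second, auxiliary application of it) to pin the polynomial down completely. In the real or complex setting one has the continuity/analyticity argument of the classical proof; over $GF(9)$ that is unavailable, so the identity must be forced by a genuinely algebraic-combinatorial identity, and checking that no spurious coincidences occur for the handful of small $m$ is the delicate part. The companion result for $m=4$ (Poncelet quadrilaterals), which presumably follows as the next theorem in the paper, would be handled by the analogous argument with a quadrilateral in place of the triangle.
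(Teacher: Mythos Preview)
Your proposal is a strategy outline rather than a proof, and you correctly flag that the crucial step~(iii)--(iv) is missing: you do not actually show how a single closed triangle forces the closure polynomial to vanish identically, and ``apply Pascal's Theorem to an appropriate hexagon'' is a placeholder, not an argument. Over $\mathbb{R}$ or $\mathbb{C}$ one closes this gap by continuity; over $GF(q)$ you would need a genuine degree count ensuring that the polynomial has more roots than its degree allows, and nothing in your sketch supplies that. As written, the proposal does not establish the theorem.

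The paper bypasses the polynomial machinery entirely with a short synthetic argument. The key lemma (Lemma~\ref{inscribedtriangles}) says: if two triangles are inscribed in a conic $O_s$, then their six sides are tangent to a common conic. Its proof is a direct computation using Brianchon's Theorem (the dual of Pascal), not Pascal itself. Granting this lemma, the theorem follows in two lines: given the Poncelet triangle with sides $t_1,t_2,t_3$ and any two consecutive sides $s_1,s_2$ of a putative longer Poncelet polygon (with vertices $S_1,S_2,S_3$ on $O_s$), let $s_3=\overline{S_1S_3}$. The lemma produces a conic $\tilde O$ tangent to all of $t_1,t_2,t_3,s_1,s_2,s_3$; but $t_1,t_2,t_3,s_1,s_2$ are already tangent to $O_t$, and five tangents determine a conic, so $\tilde O=O_t$ and $s_3$ is tangent to $O_t$. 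Thus any attempt to walk past a third vertex immediately closes into a triangle. No normalization of $O_t$, no parametrization, no closure polynomial, and no case analysis for small $m$ are needed. The idea you are missing is precisely this two-triangles lemma combined with ``five tangents determine a conic''; once you have it, the global polynomial identity you were aiming for becomes unnecessary.
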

To see this, we need some preliminary results.
\begin{theorem}  
  Let  $A,B,C,D,E$   and  $F$  be the six vertices of a
   hexagon such that no three of them are collinear.
  Then, the intersection points of opposite sides
    $$ P=\overline{AB} \cap \overline{DE}, \ Q = \overline{BC} \cap \overline{EF}, \ R=\overline{CD} \cap \overline{AF} $$
  are collinear if and only if
  the points $A,B,C,D,E$ and $F$ lie on an oval.
\label{Pascalreverse}
\end{theorem}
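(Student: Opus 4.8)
The plan is to establish both implications at once by attaching to the three pairs of opposite sides two cubic curves. Using the linear forms provided by Lemma~\ref{connection}, let $\ell_{XY}$ denote a linear form whose zero set is the line $\overline{XY}$, and put
\[
 C_1:=\ell_{AB}\,\ell_{CD}\,\ell_{EF},\qquad C_2:=\ell_{BC}\,\ell_{DE}\,\ell_{AF}.
\]
Each vertex lies on exactly one of the three lines factoring $C_1$ and on exactly one factoring $C_2$ (for instance $A\in\overline{AB}$ and $A\in\overline{AF}$), so $C_1$ and $C_2$ both vanish at $A,B,C,D,E,F$; and each of $P,Q,R$ lies on one factor of $C_1$ and one factor of $C_2$ (for instance $P\in\overline{AB}\cap\overline{DE}$), so $C_1$ and $C_2$ both vanish at $P,Q,R$ as well. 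The forms $C_1,C_2$ are moreover linearly independent: by unique factorization in $GF(q)[x,y,z]$ a proportionality would force $\{\overline{AB},\overline{CD},\overline{EF}\}=\{\overline{BC},\overline{DE},\overline{AF}\}$, which is impossible since e.g.\ $\overline{AB}=\overline{DE}$ makes $A,B,D$ collinear and $\overline{AB}=\overline{BC}$ makes $A,B,C$ collinear. Finally I would record the purely combinatorial facts forced by ``no three of $A,\dots,F$ collinear'': the points $P,Q,R$ are pairwise distinct and none of them is a vertex (e.g.\ $P=A$ would give $A\in\overline{DE}$), whence the line $m:=\overline{PQR}$ is none of the six side-lines and contains none of the vertices.

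For the direction ``$A,\dots,F$ lie on an oval $\Rightarrow P,Q,R$ collinear'' (the classical Pascal theorem), let $O$ be the oval. Since $O$ is an oval, a secant meets it in exactly two points, so — recalling $P$ is no vertex and $\overline{AB}$ meets $O$ only in $A,B$ — none of $P,Q,R$ lies on $O$; and being nonsingular, $O$ is irreducible. Because $|O|=q+1\ge 7$ for $q\ge 7$ (in particular for $q=9$), choose $X\in O\setminus\{A,\dots,F\}$; then $C_1(X)\ne 0$ and $C_2(X)\ne 0$ since $X$ lies on none of the six side-lines. The cubic $\Gamma:=C_2(X)\,C_1-C_1(X)\,C_2$ is nonzero by independence, it vanishes at the seven distinct points $A,B,C,D,E,F,X$ of $O$, and it vanishes at $P,Q,R$. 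A plane cubic and an irreducible conic with no common component meet in at most $6$ points, so $\Gamma$ and $O$ must share a component; as $O$ is irreducible this forces $O\mid\Gamma$, say $\Gamma=O\cdot L$ with $L$ linear. Then $P,Q,R\in\{\Gamma=0\}=O\cup\{L=0\}$, and since $P,Q,R\notin O$ they all lie on the line $L$.

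For the converse ``$P,Q,R$ collinear $\Rightarrow A,\dots,F$ lie on an oval'', restrict $C_1,C_2$ to $m$. Because $m$ is none of the side-lines, neither $C_i|_m$ is identically zero; a nonzero binary form of degree at most $3$ vanishing at the three distinct points $P,Q,R$ has degree exactly $3$ with $P,Q,R$ as its only zeros, each simple. Fixing $Y\in m\setminus\{P,Q,R\}$ and setting $\gamma_i:=C_i|_m(Y)\ne 0$, the cubic $\gamma_2C_1-\gamma_1C_2$ restricts on $m$ to a binary form vanishing at the four distinct points $P,Q,R,Y$, hence vanishes on all of $m$; thus $\ell_m$ divides $\gamma_2C_1-\gamma_1C_2$, which is nonzero by independence, so $\gamma_2C_1-\gamma_1C_2=\ell_m\cdot\Theta$ for some nonzero quadratic form $\Theta$. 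Evaluating at a vertex, e.g.\ $A$: $C_1(A)=C_2(A)=0$ while $\ell_m(A)\ne 0$, so $\Theta(A)=0$; likewise $\Theta$ vanishes at $B,C,D,E,F$. Hence $A,\dots,F$ all lie on the conic $\{\Theta=0\}$, which must be nonsingular, since a singular conic is a line, a pair of lines or a point and would put three of the vertices on a common line; so $\{\Theta=0\}$ is an oval through $A,\dots,F$.

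I expect no deep obstacle here: the argument is short once the preliminary observations are in place. The bookkeeping is concentrated in (i) verifying that each vertex and each Pascal point lies on exactly the two expected side-lines, (ii) the small non-degeneracy facts, all flowing from ``no three of $A,\dots,F$ collinear'', which are precisely what excludes every awkward configuration (coinciding Pascal points, a Pascal point equal to a vertex, $m$ equal to a side, a vertex on $m$), and (iii) two standard facts about conics — that a nonsingular one is irreducible, and the Bézout bound ``plane cubic $\cap$ irreducible conic $\le 6$ points''. The single hypothesis one must watch is that $q$ be large enough to furnish the auxiliary point $X$ in the forward direction, which holds for all $q\ge 7$ and in particular in the plane $PG(2,9)$ of interest.
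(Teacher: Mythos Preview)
Your argument is correct and complete; the preliminary non-degeneracy claims (distinctness of $P,Q,R$, no Pascal point equal to a vertex, $m$ not a side-line, no vertex on $m$) all follow as you indicate, and both implications go through. However, your route is genuinely different from the paper's. The paper normalises coordinates so that $A,C,E$ are the standard basis vectors, computes $P,Q,R$ explicitly via Lemma~\ref{connection}, and then shows that the $6\times 6$ determinant expressing ``$F$ lies on the conic through $A,B,C,D,E$'' reduces to the $3\times 3$ determinant $\det(P,Q,R)$, which by Lemma~\ref{CollDet} encodes collinearity. Your approach instead runs the classical cubic-pencil argument: $C_1=\ell_{AB}\ell_{CD}\ell_{EF}$ and $C_2=\ell_{BC}\ell_{DE}\ell_{AF}$ span a pencil through the nine base points, and you peel off either the conic (forward direction, via a seventh oval point and B\'ezout) or the Pascal line (converse, via restriction to $m$). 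What this buys you is a coordinate-free, conceptually transparent proof that generalises immediately to the Cayley--Bacharach circle of ideas; what it costs is an appeal to B\'ezout and the restriction $q\ge 7$ needed to furnish the auxiliary point $X$. The paper's determinant computation is more pedestrian but entirely elementary, uses nothing beyond Lemmas~\ref{connection} and~\ref{CollDet}, and is valid over every field without a size restriction---which matches the paper's aim of keeping the treatment of $PG(2,9)$ self-contained.
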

The if-statement is Pascal's Theorem, the converse is known as the  
 Braikenridge--Maclaurin Theorem. The line through $P,Q,R$ is called
Pascal's line (see Figure \ref{fig:Pascal}).

\begin{figure}[h]
  \begin{center}
\definecolor{ffqqqq}{rgb}{1,0,0}
\definecolor{qqqqff}{rgb}{0,0,1}
\begin{tikzpicture}[line cap=round,line join=round,>=triangle 45,x=0.7cm,y=0.7cm]
\clip(-3.61,-1.96) rectangle (5.83,4.5);
\draw [rotate around={-1.1:(1.14,1.22)},color=qqqqff] (1.14,1.22) ellipse (2.84cm and 1.81cm);
\draw [domain=-3.61:5.83] plot(\x,{(--8.5--3.96*\x)/3.23});
\draw [domain=-3.61:5.83] plot(\x,{(--16.26-4.38*\x)/3.16});
\draw [domain=-3.61:5.83] plot(\x,{(--0.51--4.33*\x)/-2.57});
\draw [domain=-3.61:5.83] plot(\x,{(-8.01--3.76*\x)/4.03});
\draw [domain=-3.61:5.83] plot(\x,{(-4.77-2.56*\x)/-6.98});
\draw [domain=-3.61:5.83] plot(\x,{(--11.21-3.55*\x)/6.01});
\draw [color=ffqqqq,domain=-3.61:5.83] plot(\x,{(--2.88-0.31*\x)/2.21});
\begin{scriptsize}
\fill [color=qqqqff] (-2.27,-0.15) circle (1.5pt);
\draw[color=qqqqff] (-2.7,-0.1) node {$A$};
\fill [color=qqqqff] (0.96,3.81) circle (1.5pt);
\draw[color=qqqqff] (0.45,4) node {$B$};
\fill [color=qqqqff] (4.12,-0.57) circle (1.5pt);
\draw[color=qqqqff] (4.07,-.9) node {$C$};
\fill [color=qqqqff] (0.68,-1.35) circle (1.5pt);
\draw[color=qqqqff] (0.2,-1.5) node {$E$};
\fill [color=qqqqff] (-1.89,2.98) circle (1.5pt);
\draw[color=qqqqff] (-2.4,2.85) node {$D$};
\fill [color=qqqqff] (4.71,2.41) circle (1.5pt);
\draw[color=qqqqff] (5.13,2.3) node {$F$};
\fill [color=ffqqqq] (-0.97,1.44) circle (1.5pt);
\draw[color=ffqqqq] (-1.4,1.2) node {$P$};
\fill [color=ffqqqq] (1.23,1.14) circle (1.5pt);
\draw[color=ffqqqq] (1.25,0.73) node {$R$};
\fill [color=ffqqqq] (3.07,0.88) circle (1.5pt);
\draw[color=ffqqqq] (3.66,0.5) node {$Q$};
\end{scriptsize}
\end{tikzpicture}
    \caption{Pascal's Theorem}
    \label{fig:Pascal}
  \end{center}
\end{figure}
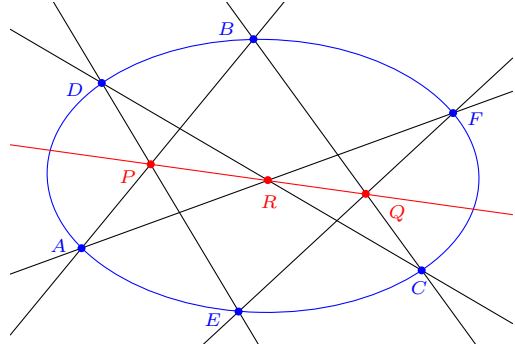
\begin{proof}
  We choose coordinates such that
$$ A = [1,0,0]^T,\ C=[0,1,0]^T,\ E=[0,0,1]^T. $$
Since no three of the points $A,B,C,D,E$ and $F$ are collinear, 
all coordinates
of the remaining three points are non-zero, so we have
$$ B = [1,B_2,B_3]^T,\ D=[1,D_2,D_3]^T,\ F=[1,F_2,F_3]^T $$
with  $B_2,B_3,D_2,D_3,F_2,F_3 \neq  0$. 
Moreover,  we have  $B_2 \neq
D_2$, $D_2 \neq  F_2$, $B_2 \neq F_2$, $B_3 \neq  D_3$, $D_3 \neq F_3$
and $B_3 \neq F_3$. To see this, assume $B_2 = D_2$. In this case, the
points
$$ B = [1,B_2,B_3]^T,\ D=[1,B_2,D_3]^T,\ E=[0,0,1]^T$$
would be  collinear,  since they  are  all  incident  with  the line  $g  =
[-B_2,1,0]$. The  same can be
shown  analogously  for  the  other  coordinates.   Using  Lemma \ref{connection}, 
we get the connecting lines
\begin{align*} 
\overline{AF} &= [0, F_3, -F_2],& \overline{AB} &=[0, B_3, -B_2],& \overline{BC} &= [B_3, 0, -1],\\
\overline{CD} &= [D_3, 0, -1],  & \overline{DE} &= [D_2, -1, 0],  & \overline{EF} &= [F_2, -1, 0].
\end{align*}
Using Lemma \ref{connection} once more, we obtain
$$ P=[B_2, B_2 D_2, B_3 D_2]^T,\ Q = [1, F_2, B_3]^T,\ R = [F_3, D_3 F_2, D_3 F_3]^T.$$
Obviously, $F$ lies on the unique conic through the points $A,B,C,D,E$ iff
\begin{eqnarray*}
\lefteqn{\hspace*{-2mm}
\det\begin{pmatrix}
A_1^2 & A_2^2 & A_3^2 & A_1 A_2 & A_1 A_3 & A_2 A_3 \\
C_1^2 & C_2^2 & C_3^2 & C_1 C_2 & C_1 C_3 & C_2 C_3 \\
E_1^2 & E_2^2 & E_3^2 & E_1 E_2 & E_1 E_3 & E_2 E_3 \\
B_1^2 & B_2^2 & B_3^2 & B_1 B_2 & B_1 B_3 & B_2 B_3 \\
D_1^2 & D_2^2 & D_3^2 & D_1 D_2 & D_1 D_3 & D_2 D_3 \\
F_1^2 & F_2^2 & F_3^2 & F_1 F_2 & F_1 F_3 & F_2 F_3 
\end{pmatrix}}\\
&=&
\det\begin{pmatrix}
1 & 0 & 0 & 0 & 0 & 0 \\
0 & 1 & 0 & 0 & 0 & 0 \\
0 & 0 & 1 & 0 & 0 & 0 \\
1 & B_2^2 & B_3^2 & B_2 & B_3 & B_2 B_3 \\
1 & D_2^2 & D_3^2 & D_2 & D_3 & D_2 D_3 \\
1 & F_2^2 & F_3^2 & F_2 & F_3 & F_2 F_3 
\end{pmatrix}\\
&=&\det\begin{pmatrix}
B_2 & B_3 & B_2 B_3 \\
D_2 & D_3 & D_2 D_3 \\
F_2 & F_3 & F_2 F_3 
\end{pmatrix} = \det (P,Q,R)=0
\end{eqnarray*}
But according to Lemma~\ref{CollDet}, this is precisely the
case for $P,Q$ and $R$ being collinear.
\end{proof}
Note that all finite projective  Desarguesian planes are self-dual (see~\cite{MR1612570}) and hence,
we may consider the dual form of Theorem~\ref{Pascalreverse}.
\begin{corollary}  
  Let  $A,B,C,D,E$  and  $F$   be  the six  vertices of a
  hexagon such that no three of them are collinear.  Then,  the diagonals  $\overline{AD}$,
  $\overline{BE}$ and $\overline{CF}$ meet in one point, the 
  Brianchon point,   if and only if the sides
  $\overline{AB}$, $\overline{BC}$,  $\overline{CD}$, $\overline{DE}$,
  $\overline{EF}$ and $\overline{FA}$ are tangents of an oval.
\label{Brianchonconverse}
\end{corollary}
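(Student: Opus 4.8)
The plan is to derive Corollary~\ref{Brianchonconverse} from Theorem~\ref{Pascalreverse} purely by dualizing, using the fact — stated just before the corollary — that every finite projective Desarguesian plane is self-dual. The key observation is that under a duality $\phi$, points become lines and lines become points, a hexagon with six vertices $A,\dots,F$ in general position becomes a hexagon with six sides $a,\dots,f$ in general position (``no three vertices collinear'' dualizes to ``no three sides concurrent''), and the oval $O$ through the six points dualizes to an oval whose tangent lines are exactly the six images of those points. This last point deserves a sentence of justification: an oval in $\mathcal{P}_n$ is an $(n+1)$-set of points, no three collinear, and its tangent lines form, under any duality, an $(n+1)$-set of points no three collinear in the dual plane, i.e.\ again an oval; conversely the tangents of an oval $O$ in the dual plane correspond to the points of an oval in the original plane. (In odd order this is just the classical fact that the dual of a conic is a conic.)

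First I would fix a duality $\phi$ of $PG(2,q)$ and translate each ingredient of Theorem~\ref{Pascalreverse}. Writing $a=\phi(A),\dots,f=\phi(F)$ for the six lines dual to the six points, the side $\overline{AB}$ of the original hexagon — the line joining $A$ and $B$ — dualizes to the point $a\cap b$; and three concurrent lines dualize to three collinear points. So: the statement ``$P=\overline{AB}\cap\overline{DE}$, $Q=\overline{BC}\cap\overline{EF}$, $R=\overline{CD}\cap\overline{AF}$ are collinear'' becomes ``the lines $\overline{ab}$ (joining $a\cap b$ to $d\cap e$), etc., \dots'' — but it is cleaner to dualize the whole biconditional at once: ``$P,Q,R$ collinear $\iff$ $A,\dots,F$ on an oval'' becomes ``the three points $a\cap b$, $d\cap e$, etc.\ are the vertices of a triangle whose \dots'' — here one simply reads off that the dual of ``the three intersection points of opposite sides are collinear'' is ``the three lines joining opposite vertices are concurrent'', i.e.\ the diagonals $\overline{ad}$, $\overline{be}$, $\overline{cf}$ meet in a point. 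Relabelling the six lines $a,\dots,f$ back as $A,\dots,F$ (now thought of as the \emph{vertices} of the dual hexagon, whose \emph{sides} are the six original points' images) yields exactly the statement of the corollary.

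The main — and really only — obstacle is bookkeeping: making sure the combinatorial structure ``hexagon'' is correctly dualized, in particular that ``opposite sides'' of the point-hexagon correspond to ``opposite vertices'' of the line-hexagon, and that the cyclic labelling $A,B,C,D,E,F$ is preserved so that $\overline{AD},\overline{BE},\overline{CF}$ in the corollary are indeed the images of the three pairs of opposite sides $(\overline{AB},\overline{DE})$, $(\overline{BC},\overline{EF})$, $(\overline{CD},\overline{FA})$ in Theorem~\ref{Pascalreverse}. I expect no genuine difficulty here, only care. A short, essentially one-paragraph proof suffices: invoke self-duality of $PG(2,q)$, note that ovals go to ovals and tangents of an oval go to points of an oval under a duality, apply $\phi$ to the biconditional of Theorem~\ref{Pascalreverse}, and translate ``opposite sides meet on a line'' to its dual ``opposite vertices lie on a line through one point'', obtaining the Brianchon statement.
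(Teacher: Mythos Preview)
Your proposal is correct and is exactly the paper's approach: the paper does not give a separate proof of the corollary at all, but simply notes that $PG(2,q)$ is self-dual and presents Corollary~\ref{Brianchonconverse} as the dual form of Theorem~\ref{Pascalreverse}. Your write-up merely spells out in more detail the dualization step (including why ovals dualize to ovals via their tangents) that the paper leaves implicit.
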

\begin{lemma}
  Let $O_s$ be an oval with  two inscribed triangles $\triangle ACE$
  and $\triangle BDF$, such  that no three of the vertices  are collinear.  Then
  the sides of the two triangles
  are tangents of an oval $O_t$.
\label{inscribedtriangles}
\end{lemma}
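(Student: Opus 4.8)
The plan is to reduce this statement to the dual of Pascal's Theorem, i.e.\ to Corollary~\ref{Brianchonconverse}. We are given an oval $O_s$ carrying the six points $A,B,C,D,E,F$ (the vertices of the two inscribed triangles $\triangle ACE$ and $\triangle BDF$), and we want to produce an oval $O_t$ tangent to the six lines $\overline{AC},\overline{CE},\overline{EA},\overline{BD},\overline{DF},\overline{FB}$. By Corollary~\ref{Brianchonconverse}, applied to the hexagon with vertices $A,C,E$ interleaved with $B,D,F$ — that is, to a hexagon whose six \emph{sides} are exactly those six lines — it suffices to exhibit an ordering of the six points as a hexagon $X_1X_2X_3X_4X_5X_6$ such that (i) no three of $X_1,\dots,X_6$ are collinear, (ii) the six sides $\overline{X_1X_2},\dots,\overline{X_6X_1}$ are precisely the six triangle sides listed above, and (iii) the three main diagonals $\overline{X_1X_4},\overline{X_2X_5},\overline{X_3X_6}$ are concurrent. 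Conditions (i) and (ii) are immediate: the hypothesis already says no three of the six vertices are collinear, and choosing the cyclic order $A,B,C,D,E,F$ (so that consecutive pairs are $AB,BC,CD,DE,EF,FA$) is the \emph{wrong} pairing, so instead I would pick the hexagon ordering so that the consecutive pairs realize $\{A,C\},\{C,E\},\{E,A\}$ for one triangle and $\{B,D\},\{D,F\},\{F,B\}$ for the other — concretely the vertex sequence $A,C,E,B,D,F$ does not work either, so the correct move is to interleave as $A,C,E$ around and note that a hexagon visiting $A\to C\to E$ already closes up the first triangle; hence one should instead read the two triangles as forming a \emph{single} closed hexagonal path only after relabelling. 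The clean way is: take the hexagon $ACEBDF$ is a degenerate choice, so take instead the dual statement directly — consider the six lines as a "hexagon of lines" $\ell_1=\overline{AC},\ell_2=\overline{BD},\ell_3=\overline{CE},\ell_4=\overline{DF},\ell_5=\overline{EA},\ell_6=\overline{FB}$, whose consecutive intersection points are the six triangle vertices, and apply Corollary~\ref{Brianchonconverse} in that form.

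The substance of the argument is then condition (iii): the three diagonals of the chosen hexagon meet in a point. Here I would invoke Pascal's Theorem (the if-direction of Theorem~\ref{Pascalreverse}) applied to the six points $A,B,C,D,E,F$, which do lie on the common oval $O_s$ — that is exactly the hypothesis. For a suitable cyclic ordering of these six points on $O_s$, Pascal gives that the three points $\overline{AB}\cap\overline{DE}$, $\overline{BC}\cap\overline{EF}$, $\overline{CD}\cap\overline{FA}$ are collinear; passing to the dual picture (all Desarguesian planes being self-dual, as noted before Corollary~\ref{Brianchonconverse}), collinearity of these three Pascal points becomes concurrency of the three diagonals of the dual hexagon, which are precisely the diagonals $\overline{X_1X_4},\overline{X_2X_5},\overline{X_3X_6}$ appearing in Corollary~\ref{Brianchonconverse}. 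Thus the diagonals are concurrent (they meet in the Brianchon point), and Corollary~\ref{Brianchonconverse} yields the desired oval $O_t$ tangent to all six sides.

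I anticipate the main obstacle to be bookkeeping rather than depth: one must choose the cyclic order of the six points carefully so that (a) the "sides" of the hexagon in Corollary~\ref{Brianchonconverse} are exactly the six sides of the two triangles $\triangle ACE$ and $\triangle BDF$ — i.e.\ the order must \emph{alternate} between the two triangles, something like $A,D,C,F,E,B$ so that consecutive pairs are $\{A,D\}$? no — one wants consecutive pairs to be within a triangle, hence the order should be $A,C,\,\star\,$ with the two triangles stitched together, and a short case check shows the order must place the triangles so their edge sets appear as the six sides — and simultaneously (b) the genericity hypotheses of Theorem~\ref{Pascalreverse}/Corollary~\ref{Brianchonconverse} (no three of the six vertices collinear) hold, which is given. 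A secondary point needing care is that the "oval $O_t$" produced by Corollary~\ref{Brianchonconverse} is genuinely an oval (nonsingular conic) and not a degenerate one; this follows because the six tangent lines are in sufficiently general position — no three of the six triangle sides are concurrent, which one should check follows from no three of $A,C,E,B,D,F$ being collinear together with the two triangles being nondegenerate. Modulo this combinatorial set-up, the proof is a one-line application of the dual of Pascal plus Corollary~\ref{Brianchonconverse}.
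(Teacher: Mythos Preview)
Your high-level strategy matches the paper's: form the hexagon of lines
$g_1=\overline{AC},\ g_2=\overline{BD},\ g_3=\overline{CE},\ g_4=\overline{DF},\ g_5=\overline{EA},\ g_6=\overline{FB}$,
take its six vertices $A_i=g_{i-1}\cap g_i$, and invoke the converse of Brianchon (Corollary~\ref{Brianchonconverse}) to conclude that $g_1,\dots,g_6$ are tangent to a common oval once you know the diagonals $\overline{A_1A_4},\overline{A_2A_5},\overline{A_3A_6}$ are concurrent. That reduction is correct, and it is exactly what the paper does.

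The gap is in how you establish the concurrency. Your proposal is to apply Pascal (Theorem~\ref{Pascalreverse}) to the six points $A,B,C,D,E,F$ on $O_s$ and then ``pass to the dual'' to turn the resulting collinear Pascal triple into the concurrent Brianchon triple. This step does not go through as stated. Pascal on any cyclic ordering of $A,B,C,D,E,F$ produces the collinearity of three \emph{points}, each an intersection of a pair of lines joining \emph{the original points} $A,\dots,F$; the diagonals $\overline{A_iA_{i+3}}$ that you need are lines through intersection points of \emph{triangle sides}, i.e.\ through the new points $A_1,\dots,A_6$, not through $A,\dots,F$. No abstract self-duality of the plane carries one configuration to the other: a duality would send the Pascal line of the hexagon on $O_s$ to the Brianchon point of a hexagon circumscribed about the dual conic of $O_s$, which is an entirely different figure from $A_1\cdots A_6$. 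Your own text reflects this difficulty --- you cycle through several orderings, reject each, and then assert the dual passage without identifying any map that realizes it.

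The paper closes this gap not synthetically but by computation: it normalizes $A,C,E$ to the coordinate triangle, writes the oval $O_s$ as $xy+exz+fyz=0$, parameterizes $B,D,F$ accordingly, computes $A_1,\dots,A_6$ and then the three diagonal lines explicitly, and verifies $\det(\overline{A_1A_4},\overline{A_2A_5},\overline{A_3A_6})=0$ using the relations $B_2=-eB_3/(1+fB_3)$ etc.\ coming from $B,D,F\in O_s$. If you want a purely synthetic argument you would need something substantially more specific than ``Pascal plus dualize'' --- e.g.\ repeated applications of Pascal or a polarity argument with respect to $O_s$ --- and that work is precisely what your proposal has not supplied.
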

This result was used in \cite{Rohn} to prove Poncelet's Theorem in the
real projective  plane for triangles.  Since the arguments  used there
cannot be applied  to the finite projective plane, we  have to give an
alternative proof here.

{\em Proof of Lemma  \ref{inscribedtriangles}.} 
Let $ A = [1,0,0]^T$, $C=[0,1,0]^T$ and $E=[0,0,1]^T$
be on $O_s$, which leads to the oval equation $ x y + e x z+ f y z = 0$,
with $e  \neq 0$ and  $f \neq  0$. 
For every other point of this oval, all
three coordinates are  non-zero. In particular, we have  (by scaling if
necessary)
$$ B = [1,B_2,B_3]^T,\ D=[1,D_2,D_3]^T,\ F=[1,F_2,F_3]^T $$
with $B_2, B_3,  D_2, D_3, F_2$ and $F_3$ non-zero.   The sides of 
$\triangle ACE$ and $\triangle BDF$ are denoted by
\begin{align*}
& \triangle ACE:\ g_1 = \overline{AC},\  \,g_3 = \overline{CE},\ g_5 = \overline{EA},\\ 
& \triangle BDF:\ g_2 = \overline{BD},\  g_4 = \overline{DF},\ g_6 = \overline{FB}.
\end{align*}
Explicitly, we have
\begin{align*}
g_1 &= [0, 0, 1],\ \ g_2 = [-B_3 D_2 + B_2 D_3, B_3 - D_3, -B_2 + D_2]\\
g_3 &= [1, 0, 0],\ \ g_4 = [-D_3 F_2 + D_2 F_3, D_3 - F_3, -D_2 + F_2]\\
g_5 &= [0, 1, 0],\ \ g_6 = [B_3 F_2 - B_2 F_3, -B_3 + F_3, B_2 - F_2].
\end{align*}
The intersection points of these lines are given by
\begin{align*}
A_1 &= g_6 \cap g_1,& A_2 &= g_1 \cap g_2,& A_3 &= g_2 \cap g_3, \\
A_4 &= g_3 \cap g_4,& A_5 &= g_4 \cap g_5,& A_6 &= g_5 \cap g_6. 
\end{align*}
This leads to
\begin{align*}
 A_1 &= [B_3 - F_3, B_3 F_2 - B_2 F_3, 0]^T\!\!, \hspace*{-1mm}& A_2 &= [B_3 - D_3, B_3 D_2 - B_2 D_3, 0]^T\!\!, \\
 A_3 &= [0, B_2 - D_2, B_3 - D_3]^T\!\!,              \hspace*{-1mm}& A_4 &= [0, D_2 - F_2, D_3 - F_3]^T\!\!, \\
 A_5 &= [D_2 - F_2, 0, -D_3 F_2 + D_2 F_3]^T\!\!, \hspace*{-1mm}& A_6 &= [B_2 - F_2, 0, -B_3 F_2 + B_2 F_3]^T\!\!.
\end{align*}
We would  like to find  an oval $O_t$, such  that the lines  $g_1,\ldots,g_6$ are
tangents  of  it.     By     Brianchon's Theorem     (Corollary
\ref{Brianchonconverse}), we  know that this is  equivalent to showing
that $ \overline{A_1A_4}$, $\overline{A_2A_5}$ and $\overline{A_3A_6}$
meet in one point.  Using Lemma \ref{connection}, we obtain
\begin{align*}
 \overline{A_1A_4} &= [B_3 D_3 F_2 - B_2 D_3 F_3 - B_3 F_2 F_3 + B_2 F_3^2, \\
&\,\quad -B_3 D_3 + B_3 F_3 + D_3 F_3 - F_3^2, B_3 D_2 - B_3 F_2 - D_2 F_3 + F_2 F_3] \\[2mm]
\overline{A_2A_5} &= [B_3 D_2 D_3 F_2 - B_2 D_3^2 F_2 - B_3 D_2^2 F_3 + B_2 D_2 D_3 F_3,\\
&\,\quad -B_3 D_3 F_2 + D_3^2 F_2 + B_3 D_2 F_3 - D_2 D_3 F_3, B_3 D_2^2 - B_2 D_2 D_3 - B_3 D_2 F_2 + B_2 D_3 F_2] \\[2mm]
\overline{A_3A_6} &= [B_2 B_3 F_2 - B_3 D_2 F_2 - B_2^2 F_3 + B_2 D_2 F_3,\\
 &\,\quad -B_2 B_3 + B_2 D_3 + B_3 F_2 - D_3 F_2, B_2^2 - B_2 D_2 - B_2 F_2 + D_2 F_2].
\end{align*}
Observe that the points $B,D$ and $F$ lie on the original oval, which means that they satisfy
$$ B_2 = \frac{- e B_3}{1 + f B_3},\ D_2 = \frac{- e D_3}{1 + f D_3}, \ F_2 = \frac{- e F_3}{1 + f F_3}.$$
Note that we have  $1 + f B_3 \neq 0$,  $1 + f D_3 \neq 0$  and $1 + f
F_3 \neq 0$. To see this, assume $1 +  f B_3 = 0$.  It follows $ B_3 =
-\frac{1}{f}$ and using  the oval equation $ x y  + e x z+ f y  z = 0$
once more, we  obtain $-\frac{e}{f} = 0$, contradicting  the fact that
$e\neq 0$ and  $ f \neq 0$.   Finally it follows that  the three lines
are concurrent, since
\begin{equation}\det(\overline{A_1A_4},\overline{A_2A_5},\overline{A_3A_6}) = 0.\tag*{$\Box$}
\end{equation}
\begin{proof}[Proof    of    Theorem   \ref{PonceletTriangle}.]    Let
  $(O_t,O_s)$ be  a pair of ovals,  such that there exists  a triangle
  which consists  of tangents of $O_t$  and  vertices  on $O_s$.  Let the
  sides of  this triangle be $t_1,t_2$ and  $t_3$.  Assume
  that there exists another closed polygon using tangents of $O_t$ with vertices
  on $O_s$. Since we need at least three vertices on $O_s$, such that the lines connecting these
  are tangent  to $O_t$ for the new polygon,  we can assume the
  existence of at least three such lines. Hence, we start with $s_1$,
  which  is a  tangent  of $O_t$  and  joins two points  of  $O_s$, denoted by $S_1$ and $S_2$.  By assumption,
  there exists  another line $s_2$, which joins $S_2$ with another point of  $O_s$ and which is a
  tangent of $O_t$. Let $s_2$ intersect $O_s$ in $S_2$ and $S_3$.
  The claim is now, that the line connecting $S_1$ and $S_3$, denoted by
$s_3$, is  a tangent of  $O_t$ as  well.  Assume the  contrary (Figure
\ref{fig:Opposite}), i.e.~assume that $s_3$ is not a tangent of $O_t$.
By Lemma  \ref{inscribedtriangles}, there  exists an oval $\tilde{O}$
such   that   $t_1,t_2,t_3,s_1,s_2$   and  $s_3$   are   tangents   of
$\tilde{O}$. But $t_1,t_2,t_3,s_1$ and $s_2$  are tangents of $O_t$ as
well. Since an oval is uniquely determined by five of its tangents, we
have $O_t$ =  $\tilde{O}$. Hence, every other polygon which can  be closed is
a  Poncelet  triangle  as  well. This  shows  that
$(O_t,O_s)$ is a Poncelet 3-pair.
\end{proof}
\begin{figure}[h]
  \begin{center}
\definecolor{uuuuuu}{rgb}{0.27,0.27,0.27}
\definecolor{ffqqqq}{rgb}{1,0,0}
\definecolor{qqwuqq}{rgb}{0,0.39,0}
\begin{tikzpicture}[line cap=round,line join=round,>=triangle 45,x=0.7cm,y=0.7cm]
\clip(-8.05,-2.67) rectangle (5.89,3.7);
\draw [rotate around={23.71:(-1.42,0.57)}] (-1.42,0.57) ellipse (1.58cm and 0.8cm);
\draw [color=qqwuqq,domain=-8.05:5.89] plot(\x,{(-7.24-2.02*\x)/-0.95});
\draw [color=qqwuqq,domain=-8.05:5.89] plot(\x,{(--4.63-0.4*\x)/2.41});
\draw [color=qqwuqq,domain=-8.05:5.89] plot(\x,{(-0.39--1.23*\x)/3.26});
\draw [color=ffqqqq,domain=-8.05:5.89] plot(\x,{(-7.85-1.98*\x)/2.31});
\draw [color=ffqqqq,domain=-8.05:5.89] plot(\x,{(--9.92--0.52*\x)/4.91});
\draw [rotate around={-173.77:(-1.65,0.31)}] (-1.65,0.31) ellipse (3.9cm and 1.48cm);
\draw [color=ffqqqq,domain=-0.3921704258884361:5.890997940382233] plot(\x,{(-1.32--3.31*\x)/2.44});
\draw [color=ffqqqq,domain=-8.054976290573041:-0.3921704258884361] plot(\x,{(--1.24-0.77*\x)/-1.43});
\begin{scriptsize}
\draw[color=black] (-2.44,0.94) node {$O_t$};
\draw[color=qqwuqq] (-1.67,3.33) node {$t_1$};
\draw[color=qqwuqq] (-6.36,3.22) node {$t_2$};
\draw[color=qqwuqq] (-6.03,-2.1) node {$t_3$};
\fill [color=black] (-5.63,1.42) circle (1.5pt);
\draw[color=black] (-5.5,1.74) node {$S_2$};
\draw[color=ffqqqq] (-6.84,2.19) node {$s_1$};
\draw[color=ffqqqq] (-7.5,0.92) node {$s_2$};
\fill [color=black] (2.05,2.24) circle (1.5pt);
\draw[color=black] (1.9,2.52) node {$S_3$};
\draw[color=black] (2.4,-1.22) node {$O_s$};
\draw[color=ffqqqq] (1.35,0.9) node {$s_3$};
\fill [color=uuuuuu] (-1.82,-1.84) circle (1.5pt);
\draw[color=uuuuuu] (-1.76,-2.2) node {$S_1$};
\end{scriptsize}
\end{tikzpicture}
    \caption{Opposite assumption: Assume that the line connecting $S_1$ and $S_3$ is not tangent to $O_t$.}
    \label{fig:Opposite}
  \end{center}
\end{figure}
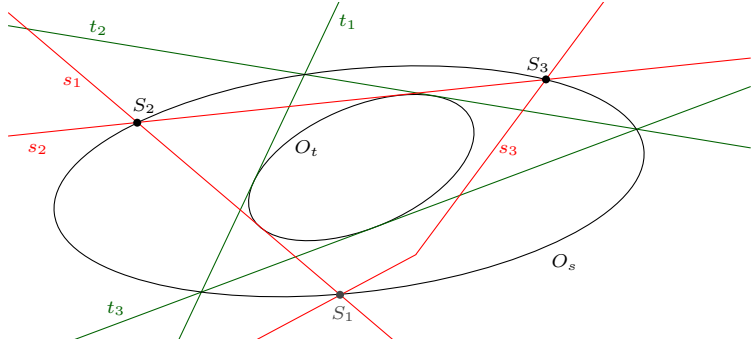
Note that we did not restrict ourselves to $PG(2,9)$ in the above proof, i.e.\
Theorem \ref{PonceletTriangle} applies to all finite projective Desarguesian planes $PG(2,q)$.

Now we turn our attention to $4$-sided Poncelet polygons.
\begin{theorem}
  Let $(O_t,O_s)$ be a pair of ovals in $PG(2,9)$ which carries a Poncelet quadrilateral. 
  Then  no $m$-sided Poncelet polygon, for $3 \leq m  \leq 10$, $m\neq 4$,  
  for $(O_t,O_s)$ can be constructed.
\label{PonceletQuadrilateral}
\end{theorem}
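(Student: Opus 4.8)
The plan is to mimic the strategy used for Poncelet triangles (Theorem~\ref{PonceletTriangle}), but now the obstruction will involve \emph{six} determined tangents rather than five. Suppose $(O_t,O_s)$ carries a Poncelet quadrilateral with sides $t_1,t_2,t_3,t_4$ (tangent to $O_t$, vertices on $O_s$), and suppose for contradiction that there is another closed $m$-sided Poncelet polygon for the same pair, $3\le m\le 10$, $m\ne 4$. As in the proof of Theorem~\ref{PonceletTriangle}, I can extract from the new polygon a short ``open chain'' of tangents of $O_t$ whose vertices lie on $O_s$; the goal is to show that closing up this chain forces a quadrilateral, contradicting $m\ne 4$. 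Concretely, I would take a chain $s_1,s_2,s_3$ of tangents of $O_t$ meeting consecutively at points $S_1,S_2,S_3,S_4$ of $O_s$ and show that the line $\overline{S_4S_1}$ is also a tangent of $O_t$, i.e.\ the chain closes after exactly four steps.

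The key tool should be a quadrilateral analogue of Lemma~\ref{inscribedtriangles}: given an oval $O_s$ carrying two inscribed quadrilaterals $ABCD$ and $A'B'C'D'$ in ``general position'', the eight sides are tangent to a common conic. If one accepts this, the argument is the following. The quadrilateral $t_1t_2t_3t_4$ inscribed in $O_s$ and the chain-quadrilateral $S_1S_2S_3S_4$ (with the conjectured closing side $s_4=\overline{S_4S_1}$) are two inscribed quadrilaterals of $O_s$; the lemma gives an oval $\tilde O$ tangent to all eight lines $t_1,t_2,t_3,t_4,s_1,s_2,s_3,s_4$. In particular $\tilde O$ shares the five tangents $t_1,t_2,t_3,t_4,s_1$ with $O_t$, so $\tilde O=O_t$, whence $s_2,s_3,s_4$ are tangents of $O_t$; but $s_4=\overline{S_4S_1}$ being tangent to $O_t$ means the new polygon closes up as a quadrilateral, so $m=4$, contradiction. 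This also handles the case $m=3$ (a triangle would be a degenerate closed chain of length $<4$, excluded because $O_t$ would then have fewer than five independent tangents forced, or directly: a Poncelet triangle together with Theorem~\ref{PonceletTriangle} would rule out the original quadrilateral). Hence $(O_t,O_s)$ is a Poncelet $4$-pair.

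The main obstacle, and the step that genuinely needs work, is the quadrilateral version of Lemma~\ref{inscribedtriangles}: two conics can be tangent to at most $\dots$ well, a conic is determined by five tangents, and eight prescribed tangents are four conditions too many, so the statement is a real coincidence theorem (a projective-dual Poncelet-type closure fact), not a cheap consequence. I would prove it by the same coordinatize-and-compute method as in Lemma~\ref{inscribedtriangles}: normalize three vertices of one inscribed quadrilateral to $[1,0,0]^T,[0,1,0]^T,[0,0,1]^T$, write $O_s$ in the resulting normal form $xy+exz+fyz=0$, express all remaining vertices via the rational parametrization $y\text{-coordinate}=-e\,(z\text{-coord})/(1+f\,(z\text{-coord}))$, form the eight side-lines by Lemma~\ref{connection}, and verify that the five tangents $t_1,\dots,t_4,s_1$ determine a conic $\tilde O$ (dual conic through five lines) on which the remaining three lines $s_2,s_3,s_4$ also lie --- equivalently that a certain $6\times 6$ determinant in the dual (line-)coordinates vanishes identically. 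As in the earlier proofs this is a finite symbolic identity; the only subtlety is bookkeeping the non-degeneracy hypotheses (no three vertices of either quadrilateral collinear, the two quadrilaterals not sharing a vertex, and the denominators $1+fB_3$ etc.\ non-zero, which follows from $e,f\ne 0$ exactly as in Lemma~\ref{inscribedtriangles}). A secondary point to check is that the ``short chain'' really can be extracted from \emph{any} competing $m$-gon with $m\ne 4$, i.e.\ that one always finds three consecutive tangent sides on three distinct vertices of $O_s$; this is the same combinatorial observation as in the triangle proof and goes through verbatim for $m\ge 4$, while $m=3$ is disposed of via Theorem~\ref{PonceletTriangle} applied in the other direction.
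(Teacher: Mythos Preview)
Your approach has a genuine gap: the quadrilateral analogue of Lemma~\ref{inscribedtriangles} that you propose to prove is \emph{false}. Two quadrilaterals inscribed in a common conic need not have their eight sides tangent to another conic. For a counterexample over~$\mathbb{R}$ (which already shows the hoped-for polynomial identity cannot hold), let $O_s$ be the unit circle and inscribe two axis-parallel rectangles with different aspect ratios, say with vertices $(\pm a,\pm b)$ and $(\pm a',\pm b')$ where $a^2+b^2=a'^2+b'^2=1$ and $|a|\ne|a'|$. The eight sides are the four vertical lines $x=\pm a,\,x=\pm a'$ and the four horizontal lines $y=\pm b,\,y=\pm b'$. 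All four vertical lines pass through the single point $[0{:}1{:}0]$ at infinity, but a nondegenerate conic admits at most two tangents through any point; hence no conic is tangent to all eight sides. The triangle lemma works precisely because six tangents overdetermine a conic by \emph{one} condition, and Brianchon's theorem --- a hexagon theorem --- supplies exactly that one relation. For eight tangents you need three relations, and there is no octagon analogue of Brianchon to produce them; your symbolic computation, which only uses that the eight vertices lie on $O_s$, would simply not return zero.

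If you try to salvage the argument by weakening the hypothesis --- assume one inscribed quadrilateral is already circumscribed about $O_t$ and three consecutive sides of the second already touch $O_t$ --- then the desired conclusion (the fourth side also touches $O_t$) is exactly Poncelet's closure theorem for $n=4$, so you would be assuming what you set out to prove.

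The paper therefore abandons the synthetic route for quadrilaterals and instead carries out a finite check specific to $GF(9)$: normalize the given Poncelet quadrilateral to four fixed points, which forces $O_s$ and $O_t$ into one-parameter families $O_s(f)$ and $O_t(b)$; exploit two coordinate symmetries to cut the parameter ranges down to sixteen pairs $(b,f)$; and for each pair count by direct inspection how many points of $O_s$ lie on a tangent of $O_t$. The maximum turns out to be $8$, so after the four vertices of the given quadrilateral are accounted for, at most four exterior points remain --- too few for a $5$- or $6$-gon. Larger $m$ is impossible since a second Poncelet polygon must be vertex-disjoint from the quadrilateral and $m+4\le 10$, while $m=3$ is excluded by Theorem~\ref{PonceletTriangle} as you noted.
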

We need to show that  the  existence of  a
Poncelet  quadrilateral for a  pair of ovals $(O_t,O_s)$ excludes
the existence of a $5$-sided Poncelet polygon as well as the existence
of a  $6$-sided Poncelet polygon  for the same  pair. To see  this, we
start  with a  pair  $(O_t,O_s)$ which  carries  a Poncelet quadrilateral.   
Recall   the  following  fundamental  theorem   for  $PG(2,q)$ (see~\cite{MR1612570}).
\begin{theorem}  
Let $\left\{  P_1,P_2,P_3,P_4 \right\}$  and $\left\{
    Q_1,Q_2,Q_3,Q_4 \right\}$  be sets  of four  points, such  that no
  three points  of the same  set are  collinear.  Then there  exists a
  unique projective map $T$, such  that $T(P_i)=Q_i$, for $1\leq i \leq 4$.
\end{theorem}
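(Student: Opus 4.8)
The plan is to work in homogeneous coordinates and use the standard fact that a projective map of $PG(2,q)$ is induced by an invertible $3\times 3$ matrix over $GF(q)$, two matrices inducing the same map precisely when they differ by a nonzero scalar. Everything reduces to the \emph{standard frame} $E_1=[1,0,0]^T$, $E_2=[0,1,0]^T$, $E_3=[0,0,1]^T$, $E_4=[1,1,1]^T$ via two claims: \textbf{(a)} for any quadruple $\{P_1,P_2,P_3,P_4\}$ with no three collinear there is a projective map sending $E_i\mapsto P_i$; \textbf{(b)} a projective map fixing each of $E_1,E_2,E_3,E_4$ is the identity. Granting these, pick $S$ with $S(E_i)=P_i$ and $S'$ with $S'(E_i)=Q_i$; then $T:=S'\circ S^{-1}$ satisfies $T(P_i)=Q_i$, and if $T'$ also does, then $(S')^{-1}\circ T'\circ S$ fixes every $E_i$, hence equals the identity by \textbf{(b)}, so $T'=S'\circ S^{-1}=T$.

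For \textbf{(a)}, choose nonzero representatives $p_1,p_2,p_3$ of $P_1,P_2,P_3$; since no three of the $P_i$ are collinear, Lemma~\ref{CollDet} gives that $p_1,p_2,p_3$ are linearly independent, hence a basis of $GF(q)^3$, and a representative $p_4$ of $P_4$ can be written $p_4=\lambda_1p_1+\lambda_2p_2+\lambda_3p_3$. Each $\lambda_i$ is nonzero, for if say $\lambda_3=0$ then $p_4\in\operatorname{span}(p_1,p_2)$, i.e.\ $P_1,P_2,P_4$ would be collinear. Rescaling $p_i\mapsto\lambda_ip_i$, we may assume $p_4=p_1+p_2+p_3$; the matrix $A$ with columns $p_1,p_2,p_3$ is then invertible, carries $E_i$ to $p_i$ for $i=1,2,3$ and $E_4$ to $p_1+p_2+p_3=p_4$, so its induced projective map sends $E_i\mapsto P_i$. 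For \textbf{(b)}, let $C$ be an invertible matrix whose induced map fixes every $E_i$. From $CE_1,CE_2,CE_3$ being scalar multiples of $E_1,E_2,E_3$ respectively, $C=\operatorname{diag}(\mu_1,\mu_2,\mu_3)$ with all $\mu_i\neq0$; from $CE_4$ being a scalar multiple of $E_4=[1,1,1]^T$ we get $\mu_1=\mu_2=\mu_3$, so $C$ is a scalar matrix and induces the identity.

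The only step requiring a moment's thought — and the one place the hypothesis ``no three of the same set are collinear'' is genuinely used — is the claim in \textbf{(a)} that every $\lambda_i$ is nonzero, which is exactly what makes it possible to send the standard frame onto an arbitrary quadruple in general position; dually, in \textbf{(b)} it is the fourth point $E_4$ that pins the three diagonal entries together. None of this depends on $q$, so the statement and the argument are valid for every $PG(2,q)$, not only for $q=9$.
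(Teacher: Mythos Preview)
Your argument is correct and is the standard proof of the fundamental theorem of projective planes over a field: reduce to the standard frame $E_1,\dots,E_4$, establish existence via the basis-and-rescaling trick, and establish uniqueness by showing that any matrix fixing all four $E_i$ is scalar. The use of Lemma~\ref{CollDet} to translate ``no three collinear'' into linear independence of representatives is exactly right, and the observation that each coefficient $\lambda_i$ must be nonzero is indeed the point where the general-position hypothesis is used.

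As for comparison with the paper: there is nothing to compare. The paper does not prove this theorem; it merely recalls it with a reference to Hirschfeld's \emph{Projective geometries over finite fields}. So you have supplied a complete self-contained proof where the paper only quotes the result. Your proof is the one that appears, in one form or another, in essentially every textbook on the subject, including the cited reference.
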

{\em Proof of Theorem~\ref{PonceletQuadrilateral}.} 
We may assume that  the pair of ovals  $(O_t,O_s)$ carries
the Poncelet quadrilateral
$$ A=[1,-1,0]^T, \ B=[1,0,-1]^T,\ C=[1,1,0]^T,\ D=[1,0,1]^T. $$
The equation of $O_s$ is of the  form
\begin{equation}
\label{generalOval}
a x^2+b y^2 + c z^2 + d x y + e x z + f y z =0,
\end{equation}
for $a,b,c,d,e,f \in GF(9)$
and the associated matrix is non-singular.  We want the points $A,B,C,D$ to
lie on the  oval $O_s$, which gives four conditions for
(\ref{generalOval}), namely
$$ 
a+b-d = 0,\quad 
a+c-e =0, \quad
a+b+d =0, \quad
a+c+e =0.
$$
Since $a=0$ would lead to a singular matrix we may scale $a=1$ and the equation for $O_s$ is
\begin{align}
O_s(f): x^2-y^2-z^2+2f yz = 0,
\end{align}
for $f\neq  \pm 1$, which  ensures that  the associated  matrix is
non-singular. It  is enough  to consider  ovals of the  above form  for $O_s$. 
Now, the four lines
$$ \overline{AB}=[1,1,1],\ \overline{BC}=[1,-1,1],\ \overline{CD}=[1,-1,-1],\ \overline{DA}=[1,1,-1] $$
need to be tangents of $O_t$.
To find the corresponding oval equation, we first determine the equations
of      ovals      which      contain      the       four      points
$[1,1,1]^T$, $[1,-1,1]^T$, $[1,-1,-1]^T$ and $[1,1,-1]^T$. We have to solve
the system of equations for its coefficients
\begin{align*} 
a+b+c+d+e+2f &= 0,\quad
a+b+c-d+e-2f =0,\\
a+b+c-d-e+2f &=0,\quad
a+b+c+d-e-2f =0.
\end{align*}
We immediately  obtain $d=e=f=0$  and after scaling  $a=1$, we  end up
with the equation $x^2  + b y^2 -(1+b) z^2=0$. Since  we need an oval
with   tangents   $[1,1,1]$, $[1,-1,1]$, $[1,-1,-1]$ and $[1,1,-1]$
rather than points, we have to  take the equation which corresponds to
the inverse matrix  of the matrix associated to the  equation $x^2 + b
y^2 -(1+b) z^2=0  $ which is
\begin{align}
O_t(b): x^2 + \frac{1}{b} y^2 - \frac{1}{1+b} z^2 =0,
\end{align}
for $b \neq 0,-1$. 
To exclude the simultaneous existence  of a $4$-sided Poncelet polygon
and a  $5$-sided or  $6$-sided Poncelet  polygon, respectively,  it is
enough  to   consider   pairs  of   the ovals  described
above. Before  we start  analyzing these oval  pairs, we  collect some
facts about the field we are  working in. Note that the multiplicative
group of a finite field is cyclic, hence we can write $GF(9)$ as
$$ GF(9)=\left\{0,1,a,a^2,a^3,a^4,a^5,a^6,a^7 \right\}, $$
where  $a$  is a  root  of  the  polynomial $f(x)=x^2+x-1$,  which  is
irreducible over $GF(3)$. Addition and multiplication obey the rules
$$ a^2+a=1,\ a^i a^j = a^{i+j},\ a^8=1. $$
Therefore, we consider pairs of ovals of the form $(O_t(b),O_s(f))$ for
$$ b \in \left\{1,a,a^2,a^3,a^5,a^6,a^7\right\} \text{ and } f \in \left\{0,a,a^2,a^3,a^5,a^6,a^7\right\}. $$
By inspecting the pair  $(O_t(b),O_s(f))$ we obtain exactly the
  same results as for $(O_t(b),O_s(-f))$, because changing the sign
  of the $y$-coordinate has the effect
  $$ [x,y,z]^T\in O_s(f) \iff [x,-y,z]^T \in O_s(-f)$$
 and 
  $$[x,y,z]^T\in O_t(b) \iff [x,-y,z]^T \in O_t(b).$$
  Hence, it     is     enough    to     consider     $f     \in
  \left\{0,a,a^2,a^3\right\}$.
  
Moreover, when calculating the coefficients $\frac{1}{b}$ and $\frac{1}{1+b}$ for all values of $b$ above, we obtain
  \begin{align*} 
   O_t(1)&: x^2+y^2+z^2=0                    & O_t(a^5)&: x^2 + a^3 y^2 + a^2 z^2 =0 \\
   O_t(a)&: x^2 + a^7 y^2 + a^5 z^2 =0 &   O_t(a^6)&: x^2 + a^2 y^2 + a^3 z^2 =0 \\
   O_t(a^2)&: x^2 + a^6 y^2 + a z^2 =0   & O_t(a^7)&: x^2 + a y^2 + a^6 z^2 =0.\\
   O_t(a^3)&: x^2 + a^5 y^2 + a^7 z^2 =0 
\end{align*}
Note that interchanging the $y$ and $z$ coordinate does not change the
incidence relations for both ovals, as both equations are symmetric, i.e.\
$$ [x,y,z]^T \in O_t(b) \Leftrightarrow [x,z,y]^T \in O_t(b)$$
and
$$[x,y,z]^T \in O_s(f) \Leftrightarrow [x,z,y]^T \in O_s(f). $$
Therefore, it is  enough to consider $b \in \left\{1,a,a^2,a^5\right\}$.

All we have to do is to exclude the existence of a $5$-sided
and a $6$-sided Poncelet polygon for the following 16 oval pairs
$$ (O_t(b),O_s(f)),\ b \in \left\{1,a,a^2,a^5\right\},\ f \in \left\{0,a,a^2,a^3\right\}. $$
By  direct inspection,  we  count the  number  of   points  on
$O_s(f)$ that  are incident  with a tangent  of $O_t(b)$.  Table \ref{table}
contains these numbers.

\begin{table}[h!]
\caption{Number of points on $O_s(f)$ which are incident with a tangent of $O_t(b)$}
\label{table}
  \begin{center}
    \begin{tabular}{c| c|c|c|c|}
         & $O_t(1)$ & $O_t(a)$ & $O_t(a^2)$ & $O_t(a^5)$ \\
         \hline
       $O_s(0)$  & 8 & 6 & 4& 4\\
       \hline
       $O_s(a)$ & 4 & 6 & 4 & 8\\
       \hline
       $O_s(a^2)$ & 8 & 4 & 5 & 5 \\
       \hline
       $O_s(a^3)$ &4 &6 & 8& 4 \\
       \hline
    \end{tabular}
  \end{center}
\end{table}
Since by  construction all of  these pairs already form  one $4$-sided
Poncelet  polygon,  the condition  of  $9$  or  $10$ exterior  points  of
$O_t(b)$ on  $O_s(f)$ is  necessary to find  a $5$-sided  or $6$-sided
Poncelet polygon,  respectively. Since there  are at most  eight exterior
points of $O_t(b)$  on $O_s(f)$, we can exclude  their existence. This
completes the proof of $PG(2,9)$ being a Poncelet plane.\hfill$\Box$

%%%%%%%%%%%%%%%%%%%%%%%%%%%%%%%%%%%%%%%%%%%%%%%%%%%%%%%%%%%%%%%%%%%%%%%%%%%%%%%%%%%%%%%%%%%%%%%%%%%%%%%%%%%%%%%%%%%%%%%%%%%%%%%%%%%%%%%%%%%%%%%%%%%%%%%%%%%%%%%%%%%%%%%%%%%%%%%%
\section{Poncelet's Theorem in the finite projective planes over $\mathfrak{S}$}
\subsection{The miniquaternion near-field $\mathfrak{S}$}
We describe the near-field we use to construct the
three non-Desarguesian finite projective  planes, denoted by $\Omega$,
$\Omega^D$  and $\Psi$.  All notations  and well-known  properties are
based on \cite{MR0298538}.

A \emph{finite near-field} is a system $(\mathfrak{S},+,\odot)$, such that
\begin{enumerate}\itemsep=0mm
  \item[(i)] $\mathfrak{S}$ is finite,
  \item[(ii)] $(\mathfrak{S},+)$ is a commutative group with identity $0$,
  \item[(ii)] the multiplication is a group operation on $\mathfrak{S}\backslash \left\lbrace 0\right\rbrace $ with identity $1$ and
  \item[(iv)]  the multiplication  is  right distributive  over the  addition, i.e. 
    $$(m+n)l=ml+nl, \forall \ m,n,l \in \mathfrak{S}.$$
\end{enumerate}
Note  that  we  do  not  necessarily need  the  multiplication  to  be
commutative, hence the left distribution law does not have to be valid
for all elements in the near-field.  This is exactly the property used
in the construction of the non-isomorphic planes of order 9. We need to
describe  addition  and  multiplication  for a  near-field  with  nine
elements. For this, consider
$$ \mathfrak{S} = \left\lbrace 0,1,-1,i,-i,j,-j,k,-k\right\rbrace $$
where we define
$$ j:=1+i,\ k:=1-i.$$
We can view the nine elements as elements over the basis $\left\lbrace
  1,i  \right\rbrace  $  and call  $\mathfrak{D}:=\left\lbrace  0,1,-1
\right\rbrace$ the  real elements  and $\mathfrak{S}^{*}:=\left\lbrace
  i,-i,j,-j,k,-k   \right\rbrace$  the   complex  elements.    By  the
definition of $j$ and $k$ above and taking the coefficients of $1$ and
$i$ modulo $3$, we are able to  add any two elements in the near-field
(Table \ref{AddTable}).  For the multiplication in  $\mathfrak{S}$, we
want to end up with a non-commutative operation. We use the relations
$$ i^2 = j^2 = k^2 =-1, \ ij=k=-ji,\ jk=i=-kj,\ ki=j=-ik $$
which again enables us to  multiply any two elements in $\mathfrak{S}$
(Table \ref{AddTable}).

\begin{table}[h!]
 \caption{Addition and multiplication of elements in $\mathfrak{S}$}
 \label{AddTable}
  \begin{center}
    \begin{tabular}{r|rrrrrrrrr }
       $+  $&$ 0 $&$ 1 $&$ -1 $&$ i $&$ -i $&$ j $&$ -j $&$ k $&$ -k$ \\
       \hline
       $0  $&$ 0 $&$ 1 $&$ -1 $&$ i $&$ -i $&$ j $&$ -j $&$ k $&$ -k$ \\
       $1 $&$ 1 $&$ -1 $&$ 0 $&$ j $&$ k $&$ -k $&$ -i $&$ -j $&$ i$   \\
       $-1 $&$ -1 $&$ 0 $&$ 1 $&$ -k $&$ -j $&$ i $&$ k $&$ -i $&$ j$   \\
       $i $&$ i $&$ j $&$ -k $&$ -i $&$ 0 $&$ k $&$ -1 $&$ 1 $&$ -j$   \\
       $-i $&$ -i $&$ k $&$ -j $&$ 0 $&$ i $&$ 1 $&$ -k $&$ j $&$ -1$   \\
       $j $&$ j $&$ -k $&$ i $&$ k $&$ 1 $&$ -j $&$ 0 $&$ -1 $&$ -i$   \\
       $-j $&$ -j $&$ -i $&$ k $&$ -1 $&$ -k $&$ 0 $&$ j $&$ i $&$ 1$   \\
       $k $&$ k $&$ -j $&$ -i $&$ 1 $&$ j $&$ -1 $&$ i $&$ -k $&$ 0 $  \\
       $-k $&$ -k $&$ i $&$ j $&$ -j $&$ -1 $&$ -i $&$ 1 $&$ 0 $&$ k$  \\ 
      \\

      $\odot$  &$  0 $&$ 1 $&$ -1 $&$ i $&$ -i $&$ j $&$ -j $&$ k $&$ -k$  \\
       \hline
      $  0 $&$ 0 $&$ 0 $&$ 0  $&$ 0 $&$  0 $&$ 0 $&$  0 $&$ 0 $&$  0$  \\
      $  1 $&$ 0 $&$ 1 $&$ -1 $&$ i $&$ -i $&$ j $&$ -j $&$ k $&$ -k$    \\
       $ -1 $&$0 $&$  -1 $&$ 1 $&$ -i $&$ i $&$ -j $&$ j $&$ -k $&$ k$   \\
      $  i $&$ 0 $&$ i $&$ -i $&$ -1 $&$ 1 $&$ k $&$ -k $&$ -j $&$ j $   \\
      $  -i $&$0 $&$  -i $&$ i $&$ 1 $&$ -1 $&$ -k $&$k $&$ j $&$ -j$    \\
      $  j $&$0 $&$  j $&$ -j $&$ -k $&$ k $&$ -1 $&$ 1 $&$ i $&$ -i$    \\
      $  -j $&$0 $&$  -j $&$ j $&$ k $&$ -k $&$ 1 $&$ -1 $&$ -i $&$ i$   \\
      $ k $&$ 0 $&$ k $&$ -k $&$ j $&$ -j $&$ -i $&$ i $&$ -1 $&$ 1 $   \\
      $ -k $&$0 $&$  -k $&$ k $&$ -j $&$ j $&$ i $&$ -i $&$ 1 $&$ -1$   
    \end{tabular}
  \end{center}
\end{table}

Note  that this  is the  multiplication law  of the  quaternion group,
which explains  the name `miniquaternion near-field'.  Note also, that
in this near-field the left distribution law does not hold in general,
as for example $ i (j+k) = i (-1) = -i$ but $ij+ik=k-j=i$.
\subsection{The plane $\Omega$}
In  order to  construct the finite  projective plane  $\Omega$ of  order 9
using the near-field $\mathfrak{S}$ we  start with an affine plane and
extend  it to  a projective  plane. We  distinguish
between so-called \emph{proper points} on  $\Omega$, which are in affine form
$(x,y)$, and  \emph{ideal points},  which connect  the parallel  lines.  More
precisely, the points of $\Omega$ are given by\vspace*{-2mm}
\begin{itemize}\itemsep=0mm
  \item[-] 81 proper points of the form $(x,y)$, for $x,y \in \mathfrak{S}$,
  \item[-] 9 ideal points of the form $(1,y,0)$ for $y \in \mathfrak{S}$ and
  \item[-] one ideal point of the form $(0,1,0)$.
\end{itemize}\vspace*{-2mm}
The lines of $\Omega$ are given by\vspace*{-2mm}
\begin{itemize}\itemsep=0mm
  \item[-] 81 proper lines of the form $y=x \mu + \nu $ for $\mu, \nu \in \mathfrak{S}$, denoted by $(\mu,\nu)$,
  \item[-] 9 proper lines of the form $x=\lambda$ for $\lambda \in \mathfrak{S}$, denoted by $(\lambda)$, and
  \item[-] one ideal line, denoted by $\mathfrak{I}$.
\end{itemize}
On the line $y=x \mu + \nu $ there are nine proper points $(x,y)$ and the ideal point $(1,\mu,0)$.
On the line $x=\lambda$ there are nine proper points $(\lambda,y)$ and the ideal point $(0,1,0)$.
All 10 ideal points are on the ideal line $\mathfrak{I}$.

It is crucial  to consider $y = x \mu  + \nu$ instead of $y =  \mu x +
\nu$, since  multiplication is not  commutative. It can be  shown that
the above  defined points and  lines with the incidence  relation give
indeed a  finite projective  plane of order  9 (see \cite{MR0298538}).

Now we want to find ovals in  the plane $\Omega$, i.e.\ we want to find
sets of  $10$ points, no  three of  which are collinear.   Compared to
finite projective coordinate  planes, it is much harder  to find ovals
in  this  plane,   since  ovals  cannot  be   described  by  quadratic
forms. Hence, for  a set of $10$  points, we have to  search all lines
connecting them  to be sure that  no three of them  are collinear. For
any point on the oval, we end up with nine lines connecting this point
to the  other points  on the oval,  and all these  secants have  to be
different.  Similarly it is much harder to find the tangent in a point
of the oval.  Nevertheless, the set $O_1$ given by
$$ \left\lbrace \begin{pmatrix} -1\\i \end{pmatrix},\begin{pmatrix} 0\\1 \end{pmatrix},\begin{pmatrix} 1\\0 \end{pmatrix},
\begin{pmatrix} -i\\-j \end{pmatrix},\begin{pmatrix} i\\j \end{pmatrix},\begin{pmatrix} -j\\-1 \end{pmatrix},
\begin{pmatrix} j\\k \end{pmatrix},\begin{pmatrix} -k\\-i \end{pmatrix},\begin{pmatrix} 0\\1\\0 \end{pmatrix},\begin{pmatrix} 1\\0\\0 \end{pmatrix}\right\rbrace $$ 
is an example of an oval in $\Omega$.
To see  this, we  have to
calculate all  secants and check,  whether they are  different.  Table
\ref{Oval1} shows all secants and tangents of $ O_1$.

\begin{table}[h]
  \caption{The diagonal entries are the tangents of $O_1$ and the 
    other entries are the secants of  the points of $O_1$ 
    listed in the first row and column.}
    \small 
\label{Oval1}
  \begin{center}{\setlength{\tabcolsep}{0.6mm}
    \begin{tabular}{c|c|c|c|c|c|c|c|c|c|c}
        $O_1$ &$\begin{pmatrix} -1\\i \end{pmatrix}$&$\begin{pmatrix} 0\\1 \end{pmatrix}$&$\begin{pmatrix} 1\\0 \end{pmatrix}$&$\begin{pmatrix} -i\\-j \end{pmatrix}$&$\begin{pmatrix} i\\j \end{pmatrix}$&$\begin{pmatrix} -j\\-1 \end{pmatrix}$&$\begin{pmatrix} j\\k \end{pmatrix}$&$\begin{pmatrix} -k\\-i \end{pmatrix}$&$\begin{pmatrix} 0\\1\\0 \end{pmatrix}$&$\begin{pmatrix} 1\\0\\0 \end{pmatrix}$\\
       \hline
        $\begin{pmatrix} -1\\i \end{pmatrix}$&$\begin{pmatrix} j\\k \end{pmatrix}$&$\begin{pmatrix} k\\1 \end{pmatrix}$&$\begin{pmatrix} i\\-i \end{pmatrix}$&$\begin{pmatrix} -1\\-k \end{pmatrix}$&$\begin{pmatrix} -j\\-1 \end{pmatrix}$&$\begin{pmatrix} -k\\-j \end{pmatrix} $&$ \begin{pmatrix} -i\\0 \end{pmatrix} $&$\begin{pmatrix} 1\\j \end{pmatrix}$&$(-1)$&$\begin{pmatrix} 0\\i \end{pmatrix}$\\
        \hline
        $\begin{pmatrix} 0\\1 \end{pmatrix} $ && $\begin{pmatrix} -i\\1 \end{pmatrix}$&$\begin{pmatrix} -1\\1 \end{pmatrix}$&$\begin{pmatrix} -j\\1 \end{pmatrix}$&$\begin{pmatrix} 1\\1 \end{pmatrix}$&$\begin{pmatrix} j\\1 \end{pmatrix}$&$\begin{pmatrix} -k\\1 \end{pmatrix}$&$\begin{pmatrix} i\\1 \end{pmatrix}$&$(0)$&$\begin{pmatrix} 0\\1 \end{pmatrix}$\\
        \hline
        $\begin{pmatrix} 1\\0 \end{pmatrix} $ &&& $\begin{pmatrix} -j\\j \end{pmatrix} $&$\begin{pmatrix} 1\\-1 \end{pmatrix} $&$ \begin{pmatrix} -i\\i \end{pmatrix} $&$ \begin{pmatrix} k\\-k \end{pmatrix}$&$ \begin{pmatrix} j\\-j \end{pmatrix} $&$ \begin{pmatrix} -k\\k \end{pmatrix} $&$ (1) $&$\begin{pmatrix} 0\\0 \end{pmatrix}$\\
        \hline
        $\begin{pmatrix} -i\\-j \end{pmatrix}$ &&&& $ \begin{pmatrix} i\\k \end{pmatrix} $&$\begin{pmatrix} -k\\0 \end{pmatrix}$&$\begin{pmatrix} -i\\-i \end{pmatrix}$&$ \begin{pmatrix} k\\j\end{pmatrix} $&$\begin{pmatrix} j\\i \end{pmatrix}$&$ (-i) $&$\begin{pmatrix} 0\\-j \end{pmatrix}$\\
        \hline
        $\begin{pmatrix} i\\j \end{pmatrix}$ &&&&& $\begin{pmatrix} j\\-i \end{pmatrix} $&$ \begin{pmatrix} -1\\k \end{pmatrix}$&$ \begin{pmatrix} i\\-k \end{pmatrix}$&$ \begin{pmatrix} k\\-j \end{pmatrix}$&$ (i) $&$\begin{pmatrix} 0\\j \end{pmatrix}$ \\
        \hline
        $\begin{pmatrix} -j\\-1 \end{pmatrix}$ &&&&&& $ \begin{pmatrix} i\\j \end{pmatrix} $&$ \begin{pmatrix} 1\\i \end{pmatrix} $&$ \begin{pmatrix} -j\\0 \end{pmatrix} $&$ (-j) $&$ \begin{pmatrix} 0\\-1 \end{pmatrix}$\\
        \hline
        $\begin{pmatrix} j\\k \end{pmatrix}$ &&&&&&& $ \begin{pmatrix} -j\\-i \end{pmatrix} $&$\begin{pmatrix} -1\\-1 \end{pmatrix} $&$ (j) $&$\begin{pmatrix} 0\\k \end{pmatrix}$\\
        \hline
        $\begin{pmatrix} -k\\-i \end{pmatrix}$ &&&&&&&& $ \begin{pmatrix} -i\\-k \end{pmatrix} $&$ (-k) $&$ \begin{pmatrix} 0\\-i \end{pmatrix}$ \\
        \hline
        $\begin{pmatrix} 0\\1\\0 \end{pmatrix}$ &&&&&&&&& $ (k) $& $\mathfrak{I}$ \\
        \hline
        $\begin{pmatrix} 1\\0\\0 \end{pmatrix}$ &&&&&&&&&& $\begin{pmatrix} 0\\-k \end{pmatrix} $
    \end{tabular}}
  \end{center}
\end{table}

Recall that in $PG(2,9)$, Pascal's Theorem plays a central r\^ole
in the proof of Poncelet's Closure  Theorem. We will see that Pascal's
Theorem is not  true in general in the plane  $\Omega$. For this, take
for example the six points
$$ A=(-1,i),\ B=(0,1),\ C=(1,0),\ D=(i,j),\ E=(-k,-i),\ F=(0,1,0). $$
These  points all lie on $O_1$, hence they lie indeed on a non-degenerate hexagon. We have
\begin{align*} 
\overline{AB}&: y=xk+1,& \overline{BC}&: y=-x+1,& \overline{CD}&:y=-xi+1 \\
\overline{DE}&: y=xk-j,& \overline{EF}&: x=-k,& \overline{FA}&: x=-1.
\end{align*}
The intersection points we need in Pascal's Theorem are given by
$$ P=(1,k,0),\ Q=(-k,-j),\ R=(-1,-i). $$
These are  \emph{not} collinear, as  the line through $P$  and $Q$ is
given by $y=xk+k$ and the line through $P$ and $R$ is $y=xk+j$.
\begin{theorem} The finite projective plane $\Omega$ of order 9 is not a Poncelet plane.
\end{theorem}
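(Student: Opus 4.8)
The strategy is to produce an explicit counterexample: a pair of ovals $(O_t,O_s)$ in $\Omega$ that is neither a Poncelet $m$-pair for any $3\le m\le 10$, nor a Poncelet $0$- or $\infty$-pair. The natural starting point is the oval $O_1$ already exhibited above, together with its full secant/tangent table (Table \ref{Oval1}). I would first pick a second oval $O_t$ in $\Omega$ — most economically by applying a collineation of $\Omega$ to $O_1$, or by searching directly for another $10$-point arc — and then read off from the tangent lines of $O_t$ which secants of $O_s:=O_1$ are tangent to $O_t$. The key point to aim for is a configuration in which the ``Poncelet graph'' on the vertices of $O_s$ (vertices joined by an edge when the connecting secant is tangent to $O_t$) has at least one edge, so that a partial Poncelet chain starts, but contains no cycle of length between $3$ and $10$ — equivalently, this graph is a forest, or more strongly a union of paths — which forces an $\infty$-pair. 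Alternatively, one could aim for a situation where a short Poncelet polygon (say a triangle) closes but the same pair also carries a polygon of a different length, again violating the Poncelet-plane condition; but the cleaner route is the $\infty$-pair.

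The execution would run as follows. First, fix $O_s = O_1$ and list its $45$ secant lines from Table \ref{Oval1}. Second, exhibit a concrete oval $O_t$ and compute its $10$ tangent lines (this is the laborious but mechanical part, analogous to the computation already done for $O_1$). Third, intersect the two lists: determine exactly which of the $45$ secants of $O_s$ appear among the $10$ tangents of $O_t$; call these the \emph{Poncelet edges}. Fourth, analyze the graph $G$ on the $10$ vertices of $O_s$ whose edges are the Poncelet edges: if $G$ is nonempty and acyclic (in the relevant range of cycle lengths), then starting from any vertex incident to a Poncelet edge one can begin a Poncelet chain but never close it, so $(O_t,O_s)$ is a Poncelet $\infty$-pair, and $\Omega$ is not a Poncelet plane. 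The failure of Pascal's Theorem in $\Omega$, demonstrated just above by the six points on $O_1$ whose opposite-side intersections $P,Q,R$ are non-collinear, is exactly the structural reason such a pathological pair should exist: the closure mechanism that works in $PG(2,9)$ is simply unavailable here.

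**Main obstacle.**
The conceptual content is light — any single bad pair suffices — so the real work is computational: one must actually produce a second oval in $\Omega$ (ovals here are not conics and there is no quadratic-form shortcut, so this requires checking that no three of ten chosen points are collinear using the near-field addition and multiplication tables), compute its tangents, and then verify by inspection that the resulting Poncelet graph on $O_s$ has an edge but no admissible cycle. The delicate sub-point is the non-commutativity of $\mathfrak S$: lines have the form $y = x\mu+\nu$, not $y=\mu x+\nu$, so every incidence check and every line-through-two-points computation must respect the order of multiplication, and the usual determinant criterion of Lemma \ref{CollDet} is unavailable. I would therefore organize the counterexample so that as many points as possible have a zero coordinate or lie among the real elements $\mathfrak D$, keeping the arithmetic manageable, and present the outcome as a table in the same style as Table \ref{Oval1}, from which the reader can directly confirm that the chosen $(O_t,O_s)$ is an $\infty$-pair.
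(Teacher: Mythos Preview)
Your proposal contains a genuine conceptual error that would make the argument fail. You open by correctly stating the goal: exhibit a pair $(O_t,O_s)$ that is \emph{not} a Poncelet $m$-pair for any $m\in\{0,3,4,\ldots,10,\infty\}$. But your execution then aims for a Poncelet graph that is a nonempty forest, ``which forces an $\infty$-pair,'' and you conclude by saying the reader should confirm ``that the chosen $(O_t,O_s)$ is an $\infty$-pair.'' That is precisely the \emph{wrong} target: by the paper's definition, a Poncelet $\infty$-pair is one of the admissible outcomes in a Poncelet plane. Exhibiting an $\infty$-pair proves nothing; every Poncelet plane may, and $PG(2,9)$ does, contain such pairs. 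Your ``cleaner route'' establishes a property that is fully compatible with $\Omega$ being a Poncelet plane.

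The approach you mention as an alternative and then dismiss --- finding a pair that simultaneously carries closed Poncelet polygons of two different lengths --- is in fact the only route, and it is exactly what the paper does: it takes $O_t=O_1$, writes down a second explicit oval $O_s$, and reads off from the secant/tangent tables both a $5$-sided and a $4$-sided Poncelet polygon for the same pair. Since a Poncelet $m$-pair must carry polygons of only one size $m$, this pair is not a Poncelet $m$-pair for any $m$ (and it visibly is neither a $0$- nor an $\infty$-pair, since polygons do close). Your computational infrastructure (second oval, tangent list, Poncelet graph) is fine and is what the paper uses too; you simply need to look for two cycles of different lengths in that graph rather than for the absence of cycles.
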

\begin{proof} We have to find a  pair of ovals $(O_t,O_s$) which carries
  at the same time an $n$-sided and an $m$-sided Poncelet polygon with
  $m\neq n$ and $m,n \geq 3$.  For $O_t$ we  take $O_1$, and  for $O_s$ we  choose the oval
$$ \left\lbrace \begin{pmatrix} 0\\j \end{pmatrix},\begin{pmatrix} i\\i \end{pmatrix},\begin{pmatrix} i\\-k \end{pmatrix},
\begin{pmatrix} -j\\j \end{pmatrix},\begin{pmatrix} j\\0 \end{pmatrix},\begin{pmatrix} j\\-j \end{pmatrix},
\begin{pmatrix} -k\\i \end{pmatrix},\begin{pmatrix} -k\\-k \end{pmatrix},\begin{pmatrix} 1\\-j\\0 \end{pmatrix},\begin{pmatrix} 1\\j\\0 \end{pmatrix}\right\rbrace $$ 
Again, we have to ensure that all secants of this set are different. For
this, see Table \ref{Oval2}.

\begin{table}[h!]
\caption{The diagonal entries indicate the tangents of  $O_s$, 
the other entries indicate the secants.}
\small  
\label{Oval2}
  \begin{center}{\setlength{\tabcolsep}{0.6mm}
    \begin{tabular}{c|c|c|c|c|c|c|c|c|c|c}
     $O_s$&$\begin{pmatrix} 0\\j \end{pmatrix}$&$\begin{pmatrix} i\\i \end{pmatrix}$&$\begin{pmatrix} i\\-k \end{pmatrix}$&$\begin{pmatrix} -j\\j \end{pmatrix}$&$\begin{pmatrix} j\\0 \end{pmatrix}$&$\begin{pmatrix} j\\-j \end{pmatrix}$&$\begin{pmatrix} -k\\i \end{pmatrix}$&$\begin{pmatrix} -k\\-k \end{pmatrix}$&$\begin{pmatrix} 1\\-j\\0 \end{pmatrix}$&$\begin{pmatrix} 1\\j\\0 \end{pmatrix}$\\
       \hline
     $\begin{pmatrix} 0\\j \end{pmatrix}$&$(0)$&$\begin{pmatrix} i\\j \end{pmatrix}$&$\begin{pmatrix} -i\\j \end{pmatrix}$&$\begin{pmatrix} 0\\j \end{pmatrix}$&$\begin{pmatrix} -1\\j \end{pmatrix}$&$\begin{pmatrix} 1\\j \end{pmatrix}$&$ \begin{pmatrix} -k\\j \end{pmatrix}$&$\begin{pmatrix} k\\j \end{pmatrix}$&$ \begin{pmatrix} -j\\j \end{pmatrix}$&$ \begin{pmatrix} j\\j \end{pmatrix}$ \\
     \hline
     $\begin{pmatrix} i\\i \end{pmatrix}$& &$\begin{pmatrix} -1\\-i \end{pmatrix}$&$(i)$&$\begin{pmatrix} k\\k \end{pmatrix}$&$\begin{pmatrix} -i\\-k \end{pmatrix} $&$ \begin{pmatrix} -k\\-1 \end{pmatrix} $&$\begin{pmatrix} 0\\i \end{pmatrix}$&$\begin{pmatrix} 1\\0 \end{pmatrix} $&$ \begin{pmatrix} -j\\1 \end{pmatrix} $&$ \begin{pmatrix} j\\-j \end{pmatrix}$ \\
     \hline
     $\begin{pmatrix} i\\-k \end{pmatrix}$ && $ $&$\begin{pmatrix} 1\\-1 \end{pmatrix} $&$ \begin{pmatrix} -k\\1 \end{pmatrix} $&$ \begin{pmatrix} k\\-i \end{pmatrix} $&$ \begin{pmatrix} i\\i \end{pmatrix} $&$\begin{pmatrix} -1\\-j \end{pmatrix}$&$\begin{pmatrix} 0\\-k \end{pmatrix}$&$\begin{pmatrix} -j\\0 \end{pmatrix}$&$\begin{pmatrix} j\\k \end{pmatrix}$ \\
     \hline
     $\begin{pmatrix} -j\\j \end{pmatrix}$ &&&& $(-j)$&$\begin{pmatrix} 1\\-j \end{pmatrix}$&$\begin{pmatrix} -1\\0 \end{pmatrix}$&$\begin{pmatrix} -i\\-1 \end{pmatrix}$&$\begin{pmatrix} i\\-i \end{pmatrix}$
     &$\begin{pmatrix} -j\\-k \end{pmatrix}$&$\begin{pmatrix} j\\i \end{pmatrix}$\\
     \hline
     $\begin{pmatrix} j\\0 \end{pmatrix}$ &&&&& $\begin{pmatrix} 0\\0 \end{pmatrix}$&$(j)$&$\begin{pmatrix} i\\k \end{pmatrix}$&$\begin{pmatrix} -k\\i \end{pmatrix}$&$\begin{pmatrix} -j\\-i \end{pmatrix}$&
     $\begin{pmatrix} j\\1 \end{pmatrix}$\\
     \hline
     $\begin{pmatrix} j\\-j \end{pmatrix}$&&&&&&$\begin{pmatrix} 0\\-j \end{pmatrix}$&$\begin{pmatrix} k\\-k \end{pmatrix}$&$\begin{pmatrix} -i\\1 \end{pmatrix}$&$\begin{pmatrix} -k\\k \end{pmatrix}$&
     $\begin{pmatrix} j\\-i \end{pmatrix}$\\
     \hline
     $\begin{pmatrix} -k\\i \end{pmatrix}$ &&&&&&& $\begin{pmatrix} 1\\1 \end{pmatrix}$&$(-k)$&$\begin{pmatrix} -j\\-i \end{pmatrix}$&$\begin{pmatrix} j\\0 \end{pmatrix}$\\
     \hline
     $\begin{pmatrix} -k\\-k \end{pmatrix}$&&&&&&&&$\begin{pmatrix} -1\\k \end{pmatrix}$&$\begin{pmatrix} -j\\-j \end{pmatrix}$&$\begin{pmatrix} j\\-1 \end{pmatrix}$\\
     \hline
     $\begin{pmatrix} 1\\-j\\0 \end{pmatrix}$ &&&&&&&&& $\begin{pmatrix} -j\\i \end{pmatrix}$&$\mathfrak{I}$\\
     \hline
     $\begin{pmatrix} 1\\j\\0 \end{pmatrix}$ &&&&&&&&&& $\begin{pmatrix} j\\-k \end{pmatrix}$ 
    \end{tabular}}
  \end{center}
\end{table}

For  this oval  pair $(O_t,O_s)$,  we  can now  find simultaneously  a
$5$-sided Poncelet  polygon and a  $4$-sided Poncelet polygon.  To see
this, start with the point $(0,j)$  on $O_s$. The line joining $(0,j)$
and   $(1,-j,0)$  is   the   line  $y=-xj+j$,   as   given  in   Table
\ref{Oval2}. This line is a tangent  of oval $O_t$, namely the tangent
in the point $(1,0)$. Continuing with $(1,-j,0)$, we see that the line
joining $(1,-j,0)$ and $(-k,i)$ is  $y=-xj-i$, which is the tangent of
$O_t$ in  the point $(j,k)$.  Moreover, the line joining  $(-k,i)$ and
$(j,0)$ is $y=xi+k$,  which is the tangent of $O_t$  in $(-i,-j)$. The
line  joining $(j,0)$  and  $(i,i)$  is $y=-xi-k$,  which  is again  a
tangent of $O_t$, namely in the point $(-k,-i)$. For the last step, we
see that  the line joining $(i,i)$  and $(0,j)$ is $y=xi+j$,  which is
the tangent  of $O_t$ in the  point $(-j,-1)$. This gives  a $5$-sided
Poncelet polygon for  this pair. Similarly, by  starting with $(i,-k)$
on  $O_s$,  a  $4$-sided  Poncelet  polygon occurs.  To
summarize the result, we have the $5$-sided Poncelet polygon
\begin{align*}
\begin{pmatrix} 0\\j \end{pmatrix} 
&\xrightarrow{(1,0)}\begin{pmatrix} 1\\-j\\0 \end{pmatrix}\xrightarrow{(j,k)} \begin{pmatrix} -k\\i \end{pmatrix} \xrightarrow{(-i,-j)}\begin{pmatrix} j\\0 \end{pmatrix}
\xrightarrow{(-k,-i)} \begin{pmatrix} i\\i \end{pmatrix}\xrightarrow{(-j,-1)} \begin{pmatrix} 0\\j \end{pmatrix}
\end{align*}
and the $4$-sided Poncelet polygon
\begin{align*}
\begin{pmatrix} i\\-k \end{pmatrix}
\xrightarrow{(-1,i)} \begin{pmatrix} 1\\j\\0 \end{pmatrix} 
\xrightarrow{(i,j)} \begin{pmatrix} j\\-j \end{pmatrix}
\xrightarrow{(0,1)} \begin{pmatrix} -k\\-k \end{pmatrix} 
\xrightarrow{(1,0,0)} \begin{pmatrix} i\\-k \end{pmatrix}.
\end{align*}
The remaining  point $(-j,j)$  on $O_s$  is an  inner point  of $O_t$,
which means  that it is not  incident with any tangent  of $O_t$. This
pair $(O_t,O_s)$  is therefore no  Poncelet $m$-pair for  any possible
value of $m$, which shows that $\Omega$ is not a Poncelet plane.
\end{proof}
This pair of ovals  gives even one more proof of  $\Omega$ not being a
Poncelet plane, namely by changing the  r\^oles of $O_t$ and $O_s$. If
we consider points on $O_t$ and tangents of $O_s$,  
we find  simultaneously a  $4$-sided and  a $3$-sided  Poncelet
polygon, namely
$$ 
\begin{pmatrix} 0\\1 \end{pmatrix} \xrightarrow{(-k,i)} \begin{pmatrix} i\\j \end{pmatrix} 
\xrightarrow{(-k,-k)} \begin{pmatrix} -j\\-1 \end{pmatrix} 
\xrightarrow{(-j,j)} \begin{pmatrix} 0\\1\\0 \end{pmatrix}
\xrightarrow{(0,j)} \begin{pmatrix} 0\\1 \end{pmatrix} 
$$
and
$$ \begin{pmatrix} 1\\0 \end{pmatrix} 
\xrightarrow{(i,-k)} \begin{pmatrix} -i\\-j \end{pmatrix} 
\xrightarrow{(j,-j)} \begin{pmatrix} 1\\0\\0 \end{pmatrix}
\xrightarrow{(j,0)} \begin{pmatrix} 1\\0 \end{pmatrix}.
$$
This shows that in $\Omega$,  Poncelet's Theorem is not even partially
true for  only $3$-sided  polygons or only  $4$-sided polygons,  as we
could find counter examples in both cases.

\subsection{The plane $\Omega^D$}

Now, we want to  look at the dual plane of $\Omega$,  that is, we want
to  change  the  r\^ole  of  the  points  and  lines  constructed  for
$\Omega$ to obtain the plane $\Omega^D$. 
Note that $\Omega^D$  is indeed not isomorphic to $\Omega$
(see \cite{MR0298538}).

Recall the incidence relation for $\Omega$, given by
$$ (x,y) \in (\mu,\nu) \Leftrightarrow y = x \mu + \nu. $$
By changing the r\^oles of points and lines, $(x,y)$ denotes a line in
$\Omega^D$ and $(\mu,\nu)$  denotes a point in  $\Omega^D$. Hence, the
incidence relation becomes
$$ (\mu,\nu) \in (x,y) \Leftrightarrow \nu=-x \mu +y. $$
For   the  line   $x=\lambda$,  the   incidence  relation   stays  the
same. Moreover, the ideal line is not changed either. By adjusting the
notation  above  by taking  $x$  instead  of  $-x$,  we obtain  the
incidence relations for $\Omega^D$. Namely,
on the line $y= \mu x + \nu $, there are nine proper points $(x,y)$ and the ideal point $(1,\mu,0)$.
On the line $x=\lambda$, there are nine proper points $(\lambda,y)$ and the ideal point $(0,1,0)$.
All 10 ideal points are on the ideal line $\mathfrak{I}$.

\begin{theorem} The finite projective plane $\Omega^D$ of order 9 is not a Poncelet plane.
\end{theorem}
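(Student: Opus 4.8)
The plan is to mimic the proof that $\Omega$ is not a Poncelet plane, exploiting the fact that $\Omega^D$ is the dual of $\Omega$. Recall that dualizing a projective plane interchanges points and lines while preserving incidence, and it sends ovals to \emph{dual ovals}, i.e.\ sets of $n+1$ lines no three of which are concurrent. In the plane $\Omega$ we already found an explicit pair of ovals $(O_t,O_s)$ together with a $5$-sided and a $4$-sided Poncelet polygon (and, after swapping the roles of $O_t$ and $O_s$, also a $4$-sided and a $3$-sided one). The strategy is to transport these configurations through the duality to produce, directly in $\Omega^D$, a pair of ovals carrying two Poncelet polygons of different sizes.

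First I would make the duality concrete using the incidence relations recorded in the excerpt: a proper point $(\mu,\nu)$ of $\Omega^D$ corresponds to a proper line of $\Omega$, and a proper line $(x,y)$ of $\Omega^D$ corresponds to a proper point of $\Omega$, with $(\mu,\nu)\in(x,y)\iff \nu = \mu x + \nu$ (after the sign adjustment the excerpt describes, so that $\Omega^D$ looks like the plane with lines $y=\mu x+\nu$). Under this correspondence, the oval $O_1$ of $\Omega$ — which has the $10$ secants/tangents listed in Table~\ref{Oval1} — becomes a set of $10$ lines in $\Omega^D$, and those $10$ lines are pairwise ``non-concurrent in triples'' precisely because the original $10$ points were not collinear in triples. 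Equivalently, the set of $10$ \emph{tangent lines} to $O_1$ in $\Omega$ becomes, after dualizing, a genuine oval (a set of $10$ points) in $\Omega^D$. I would therefore take $O_s$ in $\Omega^D$ to be the dual of the oval $O_1$ of $\Omega$, and $O_t$ in $\Omega^D$ to be the dual of the oval $O_s$ used in the $\Omega$-proof; then a tangent line of an oval in $\Omega$ dualizes to a point of the dual oval, and a point of an oval dualizes to a tangent line, so ``vertex on $O_s$, side tangent to $O_t$'' in $\Omega$ becomes exactly ``vertex on (dual $O_t$), side tangent to (dual $O_s$)'' in $\Omega^D$ — possibly with the roles of tangent oval and secant oval interchanged, which is harmless.

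With this dictionary in hand, the $5$-sided and $4$-sided Poncelet polygons exhibited for $(O_t,O_s)$ in $\Omega$ transform into a $5$-sided and a $4$-sided Poncelet polygon for the corresponding pair of ovals in $\Omega^D$ (with tangent and secant oval possibly swapped). A pair of ovals admitting both a $5$-gon and a $4$-gon is, by definition, neither a Poncelet $m$-pair for any single $m$ with $3\le m\le 10$, nor a Poncelet $0$-pair, nor a Poncelet $\infty$-pair. Hence $\Omega^D$ contains a pair of ovals violating the Poncelet plane condition, so $\Omega^D$ is not a Poncelet plane. Alternatively, one could simply reproduce a self-contained explicit computation in $\Omega^D$ in the same style as for $\Omega$: write down an oval $O_s$ in $\Omega^D$ (using lines $y=\mu x+\nu$), tabulate its secants and tangents to certify it is an oval, pick a second oval $O_t$, and trace two Poncelet polygons of different lengths through the tables.

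The main obstacle is bookkeeping rather than conceptual: one must be careful about the non-commutativity of $\mathfrak{S}$ and about the sign conventions relating $\Omega$ and $\Omega^D$ (the excerpt replaces $-x$ by $x$), since an error there would break the correspondence between tangents in $\Omega$ and points in $\Omega^D$. Concretely, I would verify on one or two lines of Table~\ref{Oval1} that the dualization formula is applied consistently — e.g.\ that a tangent $y = x\mu+\nu$ of $O_1$ really maps to a point $(\mu,\nu)$ lying on the dual oval, and that three such tangents are concurrent in $\Omega$ iff the three dual points are collinear in $\Omega^D$ — and once that is confirmed, the two Poncelet polygons transfer automatically. If instead the authors prefer the explicit route, the obstacle is just verifying that the chosen $10$-line and $10$-point sets really are ovals, which is the same finite check performed for $O_1$ and for the secant oval in the $\Omega$ case.
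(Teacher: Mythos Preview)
Your proposal is correct and follows essentially the same approach as the paper: dualize the tangent lines of the two ovals $O_t,O_s$ from $\Omega$ (with the sign adjustment) to obtain a pair of ovals in $\Omega^D$, observe that the roles of tangent oval and secant oval are interchanged under duality, and then transport the explicit $4$- and $5$-sided Poncelet polygons through the correspondence. The paper carries out exactly this computation explicitly, writing down $O_t^D$ and $O_s^D$ and the two resulting polygons; your outline, including the warning about the non-commutativity and the sign convention, matches the paper's argument step for step.
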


\begin{proof}
In  order to prove  that $\Omega^D$ is  not a
Poncelet  plane either  we can  dualize the  ovals from  the previous
section. For this, recall the oval $O_t$ given by 
$$ \left\lbrace \begin{pmatrix} -1\\i \end{pmatrix},\begin{pmatrix} 0\\1 \end{pmatrix},\begin{pmatrix} 1\\0 \end{pmatrix},\begin{pmatrix} -i\\-j \end{pmatrix},
\begin{pmatrix} i\\j \end{pmatrix},\begin{pmatrix} -j\\-1 \end{pmatrix},\begin{pmatrix} j\\k \end{pmatrix},\begin{pmatrix} -k\\-i \end{pmatrix},
\begin{pmatrix} 0\\1\\0 \end{pmatrix},\begin{pmatrix} 1\\0\\0 \end{pmatrix} \right\rbrace $$
in $\Omega$. The set of tangents of this oval is given by the lines
$$ \left\lbrace \begin{pmatrix} j\\k \end{pmatrix},\begin{pmatrix} -i\\1 \end{pmatrix},\begin{pmatrix} -j\\j \end{pmatrix},\begin{pmatrix} i\\k \end{pmatrix},\begin{pmatrix} j\\-i \end{pmatrix},
\begin{pmatrix} i\\j \end{pmatrix},\begin{pmatrix} -j\\-i \end{pmatrix},\begin{pmatrix} -i\\-k \end{pmatrix},(k),\begin{pmatrix} 0\\-k \end{pmatrix}   \right\rbrace. $$
Note that for the proper lines  $(\mu,\nu)$, we have to take the minus
sign  for  the   $x$-coordinate.  This  gives  the oval $O_t^D$
$$\left\lbrace \begin{pmatrix} -j\\k \end{pmatrix},\begin{pmatrix} i\\1 \end{pmatrix},\begin{pmatrix} j\\j \end{pmatrix},\begin{pmatrix} -i\\k \end{pmatrix},\begin{pmatrix} -j\\-i \end{pmatrix},
\begin{pmatrix} -i\\j \end{pmatrix},\begin{pmatrix} j\\-i \end{pmatrix},\begin{pmatrix} i\\-k \end{pmatrix},(k),\begin{pmatrix} 0\\-k \end{pmatrix}  \right\rbrace $$ 
in $\Omega^D$.

Similarly, the dualization of the oval $O_s$ leads to another oval in $\Omega^D$, namely $O_s^D$ given by
\small
$$ \left\lbrace \begin{pmatrix} 1\\0\\0 \end{pmatrix},\begin{pmatrix} 1\\-i \end{pmatrix},\begin{pmatrix} -1\\-1 \end{pmatrix},\begin{pmatrix} 1\\-j\\0 \end{pmatrix},\begin{pmatrix} 0\\0 \end{pmatrix},
\begin{pmatrix} 0\\-j \end{pmatrix},\begin{pmatrix} -1\\1 \end{pmatrix},\begin{pmatrix} 1\\k \end{pmatrix},\begin{pmatrix} j\\i\end{pmatrix},\begin{pmatrix} -j\\-k \end{pmatrix}\right\rbrace. $$
\normalsize
Now we can dualize the $n$-sided Poncelet polygons as well. 
Recall that for $O_t$ and $O_s$ in $\Omega$, we had the $4$-sided Poncelet polygon 
$$ \begin{pmatrix} i\\-k \end{pmatrix} 
\xrightarrow{(-1,i)} \begin{pmatrix} 1\\j\\0 \end{pmatrix} 
\xrightarrow{(i,j)} \begin{pmatrix} j\\-j \end{pmatrix} 
\xrightarrow{(0,1)} \begin{pmatrix} -k\\-k \end{pmatrix} 
\xrightarrow{(1,0,0)} \begin{pmatrix} i\\-k \end{pmatrix}
$$
for vertices on $O_s$.

The tangent of  $O_t$ in $(-1,i)$ is $(j,k)$, hence  we can start with
the corresponding  point $(-j,k)$  on $O_t^D$.  The connection  of two
points of $O_s$ in $\Omega$ is now the same as the intersection of two
tangents of $O_s^D$. Hence, in $\Omega^D$, we have to take vertices on
$O_t^D$  and tangents  of  $O_s^D$. The  next  tangent we  consider,
namely the  tangent in $(i,j)$,  corresponds to $(-j,-i)$, a  point on
$O_t^D$. The line connecting these two  points is the tangent of $O_s$
in  $(1,j,0)$  in  $\Omega$,  which  gives  $(-j,-k)$  interpreted  in
$\Omega^D$. When  performing this  operation with all  points obtained
before, we get the  $4$-sided Poncelet polygon 
$$\begin{pmatrix} -j\\k \end{pmatrix}
\xrightarrow{(-j,-k)} \begin{pmatrix} -j\\-i \end{pmatrix} 
\xrightarrow{(0,-j)} \begin{pmatrix} i\\1 \end{pmatrix}
\xrightarrow{(1,k)} \begin{pmatrix} 0\\-k \end{pmatrix} 
\xrightarrow{(-1,-1)} \begin{pmatrix} -j\\k \end{pmatrix}
$$
and $5$-sided
Poncelet polygon
$$
\begin{pmatrix} j\\j \end{pmatrix}
\xrightarrow{(j,i)} \begin{pmatrix} j\\-i \end{pmatrix} 
\xrightarrow{(-1,1)} \begin{pmatrix} -i\\k \end{pmatrix}
\xrightarrow{(0,0)}\begin{pmatrix} i\\-k \end{pmatrix} 
\xrightarrow{(1,-i)} \begin{pmatrix} -i\\j \end{pmatrix} 
\xrightarrow{(1,0,0)} \begin{pmatrix} j\\j \end{pmatrix}.
$$
Hence, $\Omega^D$ is not a Poncelet plane either.
\end{proof}

\subsection{The plane $\Psi$}

Similarly to  the construction of  the plane $\Omega$, we  can define
the  plane $\Psi$  using again the  fact  that $\mathfrak{S}$  is not  left
distributive. We make use of the homogeneous approach, unlike
the affine approach before.
A  point   is  defined  as  the   set  of  vectors
  $\left\lbrace  P \kappa  , \kappa  \in \mathfrak{S},  \kappa \neq  0
  \right\rbrace  $,  $P\in  \mathfrak{S}^3\setminus  \{(0,0,0)\}$.  A
  point  is  called  \emph{real},  if  there exists  a  non-zero  $\kappa$  in
  $\mathfrak{S}$,  such that  all coordinates  of $P  \kappa $  are in
  $\mathfrak{D}$. Otherwise, the point is called \emph{complex}.
Note  that there  are  13  real points and 78  complex points.

The \emph{line} through $P$ and $Q$ is defined by
$$ \left\lbrace P \right\rbrace \cup \left\lbrace P \kappa + Q, \kappa \in \mathfrak{S} \right\rbrace. $$
A line  is called \emph{real} if  at least two  real points are on  the line,
otherwise \emph{complex}.
We can choose the line  at infinity $z=0$. All points
not on this line can be parameterized by $P=(x,y,1)$ and all points on
the line $z=0$ can be seen as $Q=(1,\kappa,0)$. 
We get 13 real lines, namely\vspace*{-2mm}
\begin{itemize}\itemsep=0mm
  \item[-] 9 lines of the form $y=mx+c,\ m,c \in \mathfrak{D}$, denoted by $(m,c,1)$,
  \item[-] 3 lines of the form $x=c,\ c\in \mathfrak{D}$, denoted by $(c,1,0)$, and
  \item[-] one line $z=0$, denoted by $(0,0,0)$.
\end{itemize}\vspace*{-2mm}
The 78 complex lines are given by\vspace*{-2mm}
\begin{itemize}\itemsep=0mm
\item[-]  54  lines of  the form  $y-s=\kappa  (x-r), r,s  \in
  \mathfrak{D},\    \kappa    \in    \mathfrak{S}^*$,    denoted    by
  $(s,r,\kappa)$,
\item[-] 18  lines   of  the  form  $y=mx+\kappa,\   m  \in
  \mathfrak{D},\    \kappa    \in    \mathfrak{S}^*$,    denoted    by
  $(m,\kappa,1)$, and
\item[-] 6  lines $x=\kappa,\  \kappa  \in  \mathfrak{S}^*$,
  denoted by $(\kappa,1,0)$.
 \end{itemize} 
 Note that we  have parameterized the lines and points  in a different
 way, since for example the  vectors $(1,1,i)$ and $(-1,-1,-i)$ do not
 represent the same line, but they do represent the same point. It can
 be  shown that  these points  and lines  together with  the incidence
 relations form indeed  a finite projective plane of order  9 which is
 not isomorphic  to $\Omega$  or $\Omega^D$,  and $\Psi$  is self-dual
 (see \cite{MR0298538}).
 
 \begin{theorem} The finite projective plane $\Psi$ of order 9 is not a Poncelet plane.
\end{theorem}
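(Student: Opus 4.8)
The plan is to mirror the strategy used for $\Omega$ and $\Omega^D$: instead of proving a closure theorem, we exhibit an explicit pair of ovals in $\Psi$ that simultaneously carries Poncelet polygons of two different sizes, which immediately contradicts the definition of a Poncelet plane. Concretely, I would first produce one explicit oval, say $O_s$, in $\Psi$ — a set of ten points with no three collinear — working in the homogeneous coordinates of $\Psi$ and the near-field arithmetic of Table \ref{AddTable}. Since $\Psi$ is non-Desarguesian, ovals are not cut out by quadratic forms, so this step requires computing all $\binom{10}{2}=45$ connecting lines of the candidate set and verifying that they are pairwise distinct; the ten ``missing'' directions through the points then single out the ten tangent lines of $O_s$. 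As in the previous subsections, it is convenient to record all secants and tangents in a table analogous to Tables \ref{Oval1} and \ref{Oval2}, using the relations $i^2=j^2=k^2=-1$, $ij=k=-ji$, and so on.

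Next I would search for a second oval $O_t$ whose tangent lines contain enough secants of $O_s$ to close up polygons of two distinct lengths. As in the $\Omega$ case, the cleanest outcome is a single pair $(O_t,O_s)$ admitting, say, both a $4$-sided and a $5$-sided Poncelet polygon; since $\Psi$ is self-dual (see \cite{MR0298538}), such a pair may, just as for $\Omega$, even yield a second independent proof once the roles of $O_t$ and $O_s$ are exchanged. Having fixed $O_t$ — again checked to be an oval via its $45$ secants — and its ten tangents, the verification is a finite trace: starting from a vertex $S_1$ on $O_s$, follow a tangent of $O_t$ through $S_1$ to its second intersection $S_2$ with $O_s$, then follow another such tangent from $S_2$, and so on, exhibiting explicitly a chain of vertices on $O_s$ joined by tangents of $O_t$ that closes after $4$ steps in one instance and after $5$ steps in another. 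Each single step is confirmed against the precomputed secant/tangent tables by direct near-field arithmetic.

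The main obstacle is the search itself: unlike $PG(2,9)$, where the relevant oval pairs collapse to a short table by the symmetry of quadratic forms, in $\Psi$ there is no quadratic-form parametrisation, so both ovals and the closing polygons must essentially be found by direct inspection, and then every incidence must be verified by hand using the multiplication table. Once a suitable pair and the two polygons are written down, the remainder of the argument is routine: the conclusion that $\Psi$ is not a Poncelet plane follows at once from the definition, since the exhibited pair $(O_t,O_s)$ is neither a Poncelet $m$-pair for a single $m$ with $3\le m\le 10$, nor a Poncelet $0$- or $\infty$-pair.
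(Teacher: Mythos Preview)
Your plan is exactly the paper's approach: exhibit an explicit pair of ovals $(O_t,O_s)$ in $\Psi$, tabulate their secants and tangents, and then trace two closed Poncelet polygons of different lengths through those tables. The paper carries this out with a specific pair carrying a $3$-sided and a $5$-sided Poncelet polygon (rather than your suggested $4$ and $5$), but the strategy and the verification mechanism are identical.
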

 
 \begin{proof}
 Look at the two ovals $O_t$
 \small
$$ \left\lbrace \begin{pmatrix} -1\\-1\\1 \end{pmatrix}, \begin{pmatrix} -1\\1\\1 \end{pmatrix}, \begin{pmatrix} 0\\-i\\1 \end{pmatrix},
           \begin{pmatrix} 0\\i\\1 \end{pmatrix},\begin{pmatrix} 1\\0\\1 \end{pmatrix}, \begin{pmatrix} 1\\0\\1 \end{pmatrix},
           \begin{pmatrix} j\\-k\\1 \end{pmatrix}, \begin{pmatrix} j\\k\\1 \end{pmatrix},\begin{pmatrix} k\\-j\\1 \end{pmatrix},
           \begin{pmatrix} k\\j\\1 \end{pmatrix} \right\rbrace $$
\normalsize
and $O_s$
\small
$$ \left\lbrace\begin{pmatrix} -1\\i\\1 \end{pmatrix}, \begin{pmatrix} -1\\k\\1 \end{pmatrix}, \begin{pmatrix} 0\\-1\\1 \end{pmatrix}, \begin{pmatrix}0\\0\\1 \end{pmatrix}, 
\begin{pmatrix} 1\\-1\\1 \end{pmatrix}, \begin{pmatrix} 1\\1\\1 \end{pmatrix}, \begin{pmatrix} 1\\i\\1 \end{pmatrix}, \begin{pmatrix} 1\\k\\1 \end{pmatrix}, 
\begin{pmatrix} i\\k\\1 \end{pmatrix}, \begin{pmatrix} k\\i\\1 \end{pmatrix}\right\rbrace.$$
\normalsize
Similarly to  the approach before, in  Table \ref{tab5} and \ref{tab6}
we just  list all secants  and tangents to  check that these  sets are
indeed ovals.
\begin{table}
\caption{Oval $O_t$ in $\Psi$}\label{tab5}
  \footnotesize
  \begin{center}{\setlength{\tabcolsep}{0.5mm}
    \begin{tabular}{c| c|c|c|c|c|c|c|c|c|c}
           $O_t$  &$\begin{pmatrix} -1\\-1\\1 \end{pmatrix}$&$ \begin{pmatrix} -1\\1\\1 \end{pmatrix}$&$ \begin{pmatrix} 0\\-i\\1 \end{pmatrix}$&
           $ \begin{pmatrix} 0\\i\\1 \end{pmatrix}$&$ \begin{pmatrix} 1\\0\\1 \end{pmatrix}$&$ \begin{pmatrix} 1\\0\\1 \end{pmatrix}$&
           $ \begin{pmatrix} j\\-k\\1 \end{pmatrix}$&$ \begin{pmatrix} j\\k\\1 \end{pmatrix}$&$\begin{pmatrix} k\\-j\\1 \end{pmatrix}$&
           $ \begin{pmatrix} k\\j\\1 \end{pmatrix}$ \\
       \hline
        $\begin{pmatrix} -1\\-1\\1 \end{pmatrix}   $&$ \begin{pmatrix} 1\\0\\1 \end{pmatrix}$&$ \begin{pmatrix} -1\\1\\0 \end{pmatrix}$&$\begin{pmatrix} -1\\-1\\k \end{pmatrix}$&
        $ \begin{pmatrix} -1\\-1\\j \end{pmatrix}$&$ \begin{pmatrix} 0\\-1\\1 \end{pmatrix}$&$\begin{pmatrix} -1\\1\\1 \end{pmatrix}$&$ \begin{pmatrix} -1\\-1\\-j \end{pmatrix}$&
        $ \begin{pmatrix} -1\\-1\\-i \end{pmatrix}$&$ \begin{pmatrix} -1\\-1\\-k \end{pmatrix}$&$ \begin{pmatrix} -1\\-1\\i \end{pmatrix}$ \\
        \hline
        $\begin{pmatrix} -1\\1\\1 \end{pmatrix} $ && $ \begin{pmatrix} -1\\0\\1 \end{pmatrix} $&$ \begin{pmatrix} 1\\-1\\-j \end{pmatrix} $&$ \begin{pmatrix} 1\\-1\\-k \end{pmatrix} $&
        $ \begin{pmatrix} 0\\1\\1 \end{pmatrix} $&$ \begin{pmatrix} 1\\-1\\1 \end{pmatrix} $&$\begin{pmatrix} 1\\-1\\i \end{pmatrix} $&$ \begin{pmatrix} 1\\-1\\j \end{pmatrix} $&
        $ \begin{pmatrix} 1\\-1\\-i \end{pmatrix} $&$ \begin{pmatrix} 1\\-1\\k \end{pmatrix}$ \\
        \hline
        $\begin{pmatrix} 0\\-i\\1 \end{pmatrix} $ &&& $\begin{pmatrix} 0\\-1\\-i \end{pmatrix} $&$ \begin{pmatrix} 0\\1\\0 \end{pmatrix} $&$ \begin{pmatrix} 0\\-i\\1 \end{pmatrix}$&
        $ \begin{pmatrix} 0\\1\\i \end{pmatrix} $&$ \begin{pmatrix} -1\\-i\\1 \end{pmatrix}$&$ \begin{pmatrix} -1\\1\\-k \end{pmatrix}$&$ \begin{pmatrix} 1\\1\\j \end{pmatrix} $&
        $ \begin{pmatrix} 1\\-i\\1 \end{pmatrix}$ \\
        \hline
        $\begin{pmatrix} 0\\i\\1 \end{pmatrix}$ &&&& $ \begin{pmatrix} 0\\-1\\i \end{pmatrix} $&$ \begin{pmatrix} 0\\i\\1 \end{pmatrix}$&$ \begin{pmatrix} 0\\1\\-i \end{pmatrix} $&
        $\begin{pmatrix} 1\\1\\k \end{pmatrix} $&$ \begin{pmatrix} 1\\i\\1 \end{pmatrix} $&$\begin{pmatrix} -1\\i\\1 \end{pmatrix} $&$\begin{pmatrix} -1\\1\\-j \end{pmatrix}$\\
        \hline
        $\begin{pmatrix} 1\\0\\0 \end{pmatrix} $ &&&&& $ \begin{pmatrix} 0\\0\\0 \end{pmatrix} $&$ \begin{pmatrix} 0\\0\\1 \end{pmatrix} $&$ \begin{pmatrix} 0\\-k\\1 \end{pmatrix}$&
        $ \begin{pmatrix} 0\\k\\1 \end{pmatrix} $&$ \begin{pmatrix} 0\\-j\\1 \end{pmatrix} $&$ \begin{pmatrix} 0\\j\\1 \end{pmatrix}$ \\
        \hline
        $\begin{pmatrix} 1\\0\\1 \end{pmatrix}$ &&&&&& $ \begin{pmatrix} 1\\1\\0 \end{pmatrix}$&$ \begin{pmatrix} 0\\1\\j \end{pmatrix} $&$ \begin{pmatrix} 0\\1\\-j \end{pmatrix}$&
        $ \begin{pmatrix} 0\\1\\k \end{pmatrix}$&$ \begin{pmatrix} 0\\1\\-k \end{pmatrix}$\\
        \hline
        $ \begin{pmatrix} j\\-k\\1 \end{pmatrix}$ &&&&&&& $ \begin{pmatrix} -1\\0\\-k \end{pmatrix} $&$\begin{pmatrix} j\\1\\0 \end{pmatrix} $&$ \begin{pmatrix} 1\\1\\1 \end{pmatrix} $&
        $ \begin{pmatrix} 0\\0\\-i\end{pmatrix}$\\
        \hline
        $\begin{pmatrix} j\\k\\1 \end{pmatrix} $ &&&&&&&& $ \begin{pmatrix} 1\\0\\k \end{pmatrix} $&$ \begin{pmatrix} 0\\0\\i \end{pmatrix} $&$ \begin{pmatrix} -1\\-1\\1 \end{pmatrix}$ \\
        \hline
        $\begin{pmatrix} k\\-j\\1 \end{pmatrix} $ &&&&&&&&& $ \begin{pmatrix} -1\\0\\-j \end{pmatrix} $&$ \begin{pmatrix} k\\1\\0 \end{pmatrix}$ \\
        \hline
        $\begin{pmatrix} k\\j\\1 \end{pmatrix}$ &&&&&&&&&& $ \begin{pmatrix} 1\\0\\j \end{pmatrix}$ 
    \end{tabular}}
  \end{center}
\end{table}

\begin{table}
\caption{Oval $O_s$ in $\Psi$}\label{tab6}
  \footnotesize 
  \begin{center}{\setlength{\tabcolsep}{0.5mm}
    \begin{tabular}{c| c|c|c|c|c|c|c|c|c|c}
         $O_s$    &$\begin{pmatrix} -1\\i\\1 \end{pmatrix}$&$ \begin{pmatrix} -1\\k\\1 \end{pmatrix}$&$ \begin{pmatrix} 0\\-1\\1 \end{pmatrix}$&$ \begin{pmatrix} 0\\0\\1 \end{pmatrix}$&
         $ \begin{pmatrix} 1\\-1\\1 \end{pmatrix}$&$ \begin{pmatrix}1\\1\\1 \end{pmatrix}$&$ \begin{pmatrix} 1\\i\\1 \end{pmatrix}$&$\begin{pmatrix} 1\\k\\0\end{pmatrix}$&
         $\begin{pmatrix} i\\k\\1 \end{pmatrix}$&$ \begin{pmatrix} k\\i\\1 \end{pmatrix}$ \\
       \hline
       $ \begin{pmatrix} -1\\i\\1 \end{pmatrix}   $&$ \begin{pmatrix} -1\\-k\\1 \end{pmatrix}$&$ \begin{pmatrix} -1\\1\\0\end{pmatrix}$&$\begin{pmatrix} -1\\0\\-j \end{pmatrix}$&
       $ \begin{pmatrix} 0\\0\\-i \end{pmatrix}$&$ \begin{pmatrix} -1\\1\\j \end{pmatrix}$&$ \begin{pmatrix} 1\\1\\-k \end{pmatrix}$&$ \begin{pmatrix} 0\\1\\i \end{pmatrix}$&
       $ \begin{pmatrix} 1\\0\\k \end{pmatrix}$&$ \begin{pmatrix} 1\\j\\1 \end{pmatrix}$&$ \begin{pmatrix} 0\\i\\1 \end{pmatrix}$ \\
       \hline
       $ \begin{pmatrix} -1\\k\\1 \end{pmatrix} $ && $ \begin{pmatrix} -1\\-i\\1 \end{pmatrix} $&$ \begin{pmatrix} -1\\0\\j \end{pmatrix} $&$ \begin{pmatrix} 0\\0\\-k \end{pmatrix} $&
       $ \begin{pmatrix} -1\\1\\-j \end{pmatrix} $&$ \begin{pmatrix} 1\\1\\-i \end{pmatrix} $&$\begin{pmatrix} 1\\0\\i \end{pmatrix} $&$ \begin{pmatrix} 0\\1\\k \end{pmatrix} $&
       $ \begin{pmatrix} 0\\k\\1 \end{pmatrix} $&$ \begin{pmatrix} 1\\-j\\1 \end{pmatrix}$ \\
       \hline
       $ \begin{pmatrix} 0\\-1\\1 \end{pmatrix} $ && & $\begin{pmatrix} 1\\-1\\1 \end{pmatrix} $&$ \begin{pmatrix} 0\\1\\0 \end{pmatrix} $&$ \begin{pmatrix} 0\\-1\\1 \end{pmatrix} $&
       $ \begin{pmatrix} -1\\-1\\1 \end{pmatrix} $&$ \begin{pmatrix} -1\\0\\i \end{pmatrix} $&$ \begin{pmatrix} -1\\0\\k \end{pmatrix} $&$ \begin{pmatrix} -1\\0\\-k \end{pmatrix} $&
       $ \begin{pmatrix} -1\\0\\-1 \end{pmatrix}$ \\
       \hline
       $ \begin{pmatrix} 0\\0\\1 \end{pmatrix} $ &&&& $ \begin{pmatrix} 0\\0\\1 \end{pmatrix} $&$\begin{pmatrix} -1\\0\\1 \end{pmatrix}$&$ \begin{pmatrix} 1\\0\\1 \end{pmatrix}$&
       $ \begin{pmatrix} 0\\0\\i \end{pmatrix} $&$ \begin{pmatrix} 0\\0\\k \end{pmatrix}$&$ \begin{pmatrix} 0\\0\\-j \end{pmatrix} $&$ \begin{pmatrix} 0\\0\\j \end{pmatrix}$\\
        \hline
        $\begin{pmatrix} 1\\-1\\1 \end{pmatrix} $ &&&&& $ \begin{pmatrix} 1\\1\\1 \end{pmatrix} $&$ \begin{pmatrix} 1\\1\\0 \end{pmatrix} $&$ \begin{pmatrix} -1\\1\\i \end{pmatrix} $&
        $ \begin{pmatrix} -1\\1\\k \end{pmatrix} $&$ \begin{pmatrix} -1\\1\\-i \end{pmatrix} $&$ \begin{pmatrix} -1\\1\\-k \end{pmatrix}$ \\
        \hline
        $\begin{pmatrix} 1\\1\\1 \end{pmatrix}$ &&&&&& $ \begin{pmatrix} 0\\1\\1 \end{pmatrix} $&$ \begin{pmatrix} 1\\1\\i \end{pmatrix} $&$ \begin{pmatrix} 1\\1\\k \end{pmatrix} $&
        $ \begin{pmatrix} 1\\1\\j \end{pmatrix} $&$ \begin{pmatrix} 1\\1\\-j \end{pmatrix}$\\
        \hline
        $ \begin{pmatrix} 1\\i\\0 \end{pmatrix}$ &&&&&&& $ \begin{pmatrix} -1\\-1\\i \end{pmatrix} $&$ \begin{pmatrix} 0\\0\\0 \end{pmatrix} $&$ \begin{pmatrix} 0\\-1\\i \end{pmatrix}$
        &$ \begin{pmatrix} 1\\-1\\i \end{pmatrix}$\\
        \hline
        $ \begin{pmatrix} 1\\k\\0 \end{pmatrix} $ &&&&&&&& $ \begin{pmatrix} -1\\-1\\k \end{pmatrix} $&$ \begin{pmatrix} 1\\-1\\k\end{pmatrix} $&$ \begin{pmatrix} 0\\-1\\k \end{pmatrix}$ \\
      \hline
       $ \begin{pmatrix} i\\k\\1 \end{pmatrix} $ &&&&&&&&& $ \begin{pmatrix} i\\1\\0 \end{pmatrix}$&$ \begin{pmatrix} -1\\1\\1 \end{pmatrix}$ \\
        \hline
        $\begin{pmatrix} k\\i\\1 \end{pmatrix}$ &&&&&&&&&& $ \begin{pmatrix} k\\1\\0 \end{pmatrix}$ 
    \end{tabular}}
  \end{center}
\end{table}

Using these tables, we will see that in $\Psi$ we can find pairs of ovals which carry an $n$-sided and an $m$-sided Poncelet polygon for $m \neq n$.
Indeed, we are able to find the $5$-sided Poncelet polygon
\small
$$ \begin{pmatrix} -1\\i\\1 \end{pmatrix}
\xrightarrow{(k,-j,1)} \begin{pmatrix} 0\\-1\\1 \end{pmatrix} 
\xrightarrow{(j,-k,1)} \begin{pmatrix} i\\k\\1 \end{pmatrix} 
\xrightarrow{(0,i,1)} \begin{pmatrix} 1\\i\\0 \end{pmatrix} 
\xrightarrow{(1,0,0)} \begin{pmatrix} 1\\k\\0 \end{pmatrix}
\xrightarrow{(j,k,1)} \begin{pmatrix} -1\\i\\1 \end{pmatrix}$$
\normalsize
and the $3$-sided Poncelet polygon
\small
$$ \begin{pmatrix} 0\\0\\1 \end{pmatrix}
\xrightarrow{(-1,-1,1)} \begin{pmatrix} 1\\1\\1 \end{pmatrix} 
\xrightarrow{(1,0,1)} \begin{pmatrix} 1\\-1\\1 \end{pmatrix}
\xrightarrow{(-1,1,1)} \begin{pmatrix} 0\\0\\1 \end{pmatrix}.$$
\normalsize
This shows that $\Psi$ is not a Poncelet plane.
\end{proof}

\section*{Acknowledgement}
We would like to thank Tim Penttila and the anonymous referee for helpful
comments on a previous version of this paper.

%%%%%%%%%%%%%%%%%%%%%%%%%%%%%%%%%%%%%%%%%%%%%%%%%%%%%%%%%%%%%%%%%%%%%%%%%%%%%%%%%%%%%%%%%%%%%%%%%%%%%%%%%%%%%%%%%%%%%%%%%%%%%%%%%%%%%%%%%%%%%%%%%%%%%%%%%%%%%%%%%%%%%%%

\end{document}